\newcommand\p{{\mathbb{P}}}
\newcommand\sh{{\mathcal{O}}}
\newcommand\Pic[1]{\hbox{\rm Pic(}#1\hbox{\rm )}}
\newcommand\C{{\mathbb{C}}}
\newcommand\U{{\mathcal{U}}}
\newcommand\E{{\mathcal{E}}}
\newcommand\s{{\mathcal{S}}}
\newcommand\bl{{\rm Bl}}
\newcommand\Chow{{{\rm Chow}}}
\newcommand\codim{{{\rm codim}}}
\newtheorem{theorem}{Theorem}[section]
\newtheorem{corollary}[theorem]{Corollary}
\newtheorem{clam}[theorem]{Claim}
\newtheorem{prop}[theorem]{Proposition}
\theoremstyle{definition}
\newtheorem{definition}[theorem]{Definition}
\newtheorem*{ack}{Acknowledgments}
\newtheorem*{notation}{Notation}
\theoremstyle{remark}
\newtheorem{re}[theorem]{Remark}
\newtheorem{step}{Step}
    \newtheoremstyle{TheoremNum}
        {1}{1}              
        {\itshape}                      
        {}                              
        {\bfseries}                     
        {.}                             
        { }                             
        {\thmname{#1}\thmnote{ \bfseries #3}}
    \theoremstyle{TheoremNum}
    \newtheorem{thmn}{Theorem}
\begin{document}

\author    {Davide Fusi}
\address   {Department of Mathematics,
                  The Ohio State University,
                  231 W 18th Ave,
                  Columbus, OH 43210, USA}
 
\email     {fusi.1@math.osu.edu}

\title{On rational varieties of small degree}

\maketitle

\begin{abstract} We prove a stronger version of a criterion of rationality given by Ionescu and Russo. We use this stronger version to define an invariant for rational varieties (we call it rationality degree), and we classify rational varieties for small values of the invariant.
\end{abstract}

\section{Introduction}

In \cite{IR}, Ionescu and Russo gave a criterion that  describes rationality in terms of suitable covering families of rational curves passing through a fixed point $x$ contained in the smooth locus of $X$. More precisely, they proved that if there exists a family of rational curves $V$ on a projective variety $X$ such that:
\begin{itemize}
\item[$i)$] the curves parametrized by $V$ cover $X$,
\item[$ii)$] all the curves parametrized by $V$ pass through a smooth point $x$ at which are all smooth,
\item[$iii)$] the general curve parametrized by $V$ is irreducible and is uniquely determined by its tangent vector at $x$,
\end{itemize}
then $X$ is rational.

Their criterion leaves open the question whether a {\em global system of parameters} can be centered at the point $x$,  that is, if the local ring of $X$ at $x$ is $\C$-isomorphic to the local ring of a point $y \in \p^n$.
Karzhemanov in \cite{Ka} proved  that even smoothness and rationality of $X$ does not guarantee that the local ring of $X$ at any of its point is $\C$-isomorphic to the local ring of a point $y \in \p^n$. In this note, we address the question and we make their result more precise proving the following:

\begin{thmn}[\ref{cr}] Let $X$ be a projective variety of dimension $n$. Suppose that a family of rational curves $V$ satisfies the hypothesis $i)$, $ii)$ and $iii)$ above for some point $x\in X_{reg}$. Then $x$ admits a global system of parameters,  or equivalently, the local ring of $X$ at $x$ is $\C$-isomorphic to the local ring of a point $y \in \p^n$. In particular, general curves of the family  are naturally identified locally to
images of lines in $\p^n$ passing through $y$ via a birational map $X\dashrightarrow \p^n$.
\end{thmn}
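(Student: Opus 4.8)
The plan is to blow up $x$ and show that the exceptional divisor, together with the strict transforms of the members of $V$, reproduces verbatim the local picture of a point $y\in\p^n$ with the lines through it. First I would study the tangent map $\tau\colon V\dashrightarrow\p(T_xX)\cong\p^{n-1}$ sending a member of $V$ to its tangent direction at $x$, which is defined because by $ii)$ every member is smooth at $x$. Hypothesis $iii)$ makes $\tau$ generically injective, while hypothesis $i)$ makes the evaluation $e\colon\U\to X$ of the universal family $p\colon\U\to V$ dominant, so $\dim V\ge n-1$; hence $\dim V=n-1$ and $\tau$ is birational onto $\p^{n-1}$. Replacing $V$ by $\p^{n-1}$ I may assume $V=\p^{n-1}$, with $C_v$ the member of tangent direction $v$. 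The universal family then carries a section $\sigma\colon V\to\U$ with $e\circ\sigma\equiv x$, sending $v$ to the point of the fibre $\widetilde C_v\cong\p^1$ lying over $x$; by $ii)$ the total space $\U$ is smooth along $\sigma(V)$ and $e$ is an immersion on $\widetilde C_v$ at $\sigma(v)$.

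The key step is the identity $N_{\sigma(V)/\U}\cong\sh_{\p^{n-1}}(-1)$, forced by $ii)$. On one hand $e$ contracts the divisor $\sigma(V)$ to the point $x$, so this line bundle on $\p^{n-1}$ must be $\sh(-a)$ with $a\ge 1$. On the other hand, differentiating $e$ in the normal directions to $\sigma(V)$ gives a sheaf morphism $N_{\sigma(V)/\U}\to T_xX\otimes\sh_{\p^{n-1}}$ whose value at $v$ is the inclusion of the line $v\subset T_xX$; it therefore factors through the tautological subbundle $\sh(-1)\subset T_xX\otimes\sh_{\p^{n-1}}$, and it is a fibrewise isomorphism onto $\sh(-1)$ precisely because $e$ is an immersion on $\widetilde C_v$ at $\sigma(v)$. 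Hence $a=1$, and $\sigma(V)\cong\p^{n-1}$ sits in $\U$ with normal bundle $\sh(-1)$, so it contracts algebraically to a smooth point.

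Next I would identify the local model. Near $\sigma(V)$ the morphism $\U\to\p^{n-1}$ is a $\p^1$-bundle of which $\sigma(V)$ is a section of normal bundle $\sh(-1)$; since $H^1(\p^{n-1},\sh(1))=0$, any such $\p^1$-bundle is $\proj{\sh\oplus\sh(1)}\cong\bl_y\p^n$, with $\sigma(V)$ the exceptional divisor $E_y$ and the fibres $\widetilde C_v$ the strict transforms of the lines through $y$. Thus a neighbourhood of $\sigma(V)$ in $\U$ is isomorphic to a neighbourhood of $E_y$ in $\bl_y\p^n$; let $c$ be the contraction of $\sigma(V)$, with image a variety $X_1$ that is, near $x_1:=c(\sigma(V))$, isomorphic to $\p^n$ near $y$, the map $c$ being the blow-up of $x_1$. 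Since $e$ is constant on $\sigma(V)$ it factors as $e=g\circ c$ for a morphism $g$ with $g(x_1)=x$. Reading $de=dg\circ dc$ at $\sigma(v)$: the image of $dc$ is the line of $T_{x_1}X_1$ determined by $v\in E_y=\p(T_{x_1}X_1)$, and the image of $de$ is the tangent line of $C_v$ at $x$; hence $dg_{x_1}$ carries each of these coordinate lines isomorphically onto the corresponding one, so it induces a bijection of $\p^{n-1}$ and is therefore a linear isomorphism. Consequently $g$ is étale at $x_1$, and $\sh_{X,x}\cong\sh_{X_1,x_1}\cong\sh_{\p^n,y}$. Tracing a general $C_v$ back through $c$ and the identification above shows that it corresponds, near $x$, to a line through $y$. (If one shows in addition that $e$ is birational, then $g$ is an isomorphism near $x_1$ and $X$ itself is locally isomorphic to $\p^n$ near $y$.)

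I expect the main difficulty to be the identification in the previous paragraph: realizing a Zariski---not merely formal---neighbourhood of $\sigma(V)$ in $\U$ as an open subset of $\proj{\sh\oplus\sh(1)}$ over $\p^{n-1}$, and carrying the $\p^1$-fibration structure through the contraction $c$ so that the differential comparison $de=dg\circ dc$ is legitimate. This is exactly where smoothness of $x$ (producing the $\sh(-1)$) and the uniqueness of the member from its tangent vector (producing the fibration over $\p^{n-1}$) are both indispensable; the remaining passages---the dimension count, the normal-bundle computation, and the deduction of the local-ring isomorphism from étaleness at $x_1$---are routine.
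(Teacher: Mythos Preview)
Your proposal has two genuine gaps, one of which you flag but do not resolve.

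The first and decisive one is the replacement of $V$ by $\p^{n-1}$. The tangent map $\tau\colon V\to\p(T_xX)$ is a birational \emph{morphism}, but not an isomorphism in general; indeed $V\cong\p^{n-1}$ is essentially equivalent to $X\cong\p^n$ (this is the content of the paper's Proposition~\ref{Pnchar}, whose proof uses that $\nu$ is finite only when every member of $V$ is irreducible). Once $\sigma(V)\not\cong\p^{n-1}$, your normal-bundle identification $N_{\sigma(V)/\U}\cong\sh_{\p^{n-1}}(-1)$ fails---what one actually gets, after passing to a resolution $V'\to V$, is the pullback of $\sh(-1)$ along a non-isomorphic birational morphism $V'\to\p^{n-1}$---and with it the Castelnuovo-type contraction to a smooth point and the identification with $\bl_y\p^n$. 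The Zariski $\p^1$-bundle structure near $\sigma(V)$ that you request is likewise unavailable: over points $v$ with $C_v$ reducible, the extra components meet the component through $x$, and there is no reason a Zariski-open neighbourhood of $\sigma(V)$ can excise them.

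The second gap is the inference ``$g$ \'etale at $x_1$ $\Rightarrow$ $\sh_{X,x}\cong\sh_{X_1,x_1}$''. \'Etaleness gives only an isomorphism of completed local rings, which for smooth points is automatic and vacuous; an isomorphism of the Zariski local rings is equivalent to a Zariski-local isomorphism and requires $g$ birational. You relegate birationality of $e$ to a parenthetical, but it is not optional---it is exactly the Ionescu--Russo statement (Theorem~\ref{ir}) and must be invoked for your conclusion to hold.

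The paper avoids both issues by a different route. It never replaces $V$ by $\p^{n-1}$ and never contracts the section. Instead it passes to a smooth model $Y$ of the universal family over a resolution $V'\to V$, with section $\E\cong V'$, and builds the morphism $\varphi\colon Y\to\p^n$ explicitly as the map given by the complete linear system $|D+\E|$, where $D$ is the pullback of a hyperplane along $Y\to V'\to\p^{n-1}$. A cohomological computation (using $\sh_\E(D+\E)\cong\sh_\E$ and $H^1(Y,\sh_Y(\E))=0$) shows $h^0(Y,D+\E)=n+1$ and that the restriction to a general fibre is $\sh_{\p^1}(1)$; hence $\varphi$ contracts $\E$ to a point $p$ and sends general fibres isomorphically to lines through $p$. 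The induced birational map $\phi\colon X\dashrightarrow\p^n$ is then shown to be defined at $x$ (since $|D+\E|$ has a member disjoint from $\E$), and symmetrically $\phi^{-1}$ is defined at $p$, so $\phi$ is a local isomorphism at $x$. This argument is insensitive to what $V$ looks like globally and makes no $\p^1$-bundle claim.
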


We would like to point out that Theorem \ref{cr} was inspired by the criterion for rationality given in the first version of \cite{IR}.

A consequence of Theorem \ref{cr} is that any family of rational curves $V$  satisfying assumption $i)$, $ii)$ and $iii)$ above carries much information on the geometry of $X$. Namely, the normalization $U\rightarrow \mathcal{U}$ of the universal family $\mathcal{U}\rightarrow V$ is a resolution of a birational map $\phi : X \dashrightarrow \p^n$ that sends the irreducible curves parametrized by $V$ into lines of the projective space $\p^n$ (see Corollary \ref{ccr}). In particular, to any family of rational curves  $V$ as above corresponds a unique birational map $\phi^V : X \dashrightarrow \p^n$ mapping irreducible curves parametrized by $V$ into lines through a point $p$ image of $x$ (here unique is intended modulo isomorphisms of $\p^n$).

Building from this, we define an invariant that can be attached to the variety $X$ in the following way. First, for any family $V$ as above,  we define the degree of $V$ as the minimal intersection of the elements parametrized by $V$ with very ample divisors whose complete linear system contains a set of sections defining $\phi^V$ (Definition \ref{rd1}). Then, we define the {\em rationality degree} of $X$ as the minimum degree among all the families (Definition \ref{rd2}). Note that this is the natural choice when we think the variety $X\subset \p^N$ embedded. Moreover, this invariant at the same time gives us control on the number of components of the reducible curves of $V$ and implicitly carries information on the birational map $\phi^V : X \dashrightarrow \p^n$ induced by $V$. When the well understood theory of smooth rational surfaces is revisited under this prospective, it turns out that the quadric $\p^1 \times \p^1$ has rationality degree $2$ and the Hirzebruch surface $\mathbb{F}_k$ has rationality degree $k+1$. This shows how this invariant somehow measures the "birational distance" of the variety $X$ from the projective space $\p^n$.

We conclude this note by studying rational varieties for small values of the rationality degree. As one can expect, the only variety with rationality degree one is the projective space $\p^n$ (Proposition \ref{Pn}). When the rationality degree is two, we have the following. 
\begin{thmn}[\ref{thmdeg2}] Let $X$ be a smooth projective rational variety. If the rationality degree of $X$ is $2$ then $X$ is one of the following:
\begin{itemize}
\item[a)] $Q^n\subset\p^{n+1}$, the quadric hypersurface;
\item[b)] $\bl_L(\p^n)$, the blow-up of $\p^n$ along a linear subspace $L$;
\item[c)] $\p^k\times \p^h$, with $h,k\geq 1$;
\item[d)] $Z\subset\p^k\times \p^h$, where $Z$  is the general section in $|\sh(1,1)|$ and $h+k\geq 3$;
\end{itemize}
\end{thmn}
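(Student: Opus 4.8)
The plan is to convert the hypothesis $\mathrm{ratdeg}(X)=2$ into concrete projective geometry, recognise the resulting configuration as a very special conic–connected structure, quote the corresponding classification, and finally compute the rationality degree of each candidate to keep exactly the four families a)--d). First I would fix a family $V$ and a very ample divisor $A$ realising the rationality degree, together with the linear subsystem $W\subseteq|A|$, $\dim W=n$, defining $\phi=\phi^V\colon X\dashrightarrow\p^n$, so that $A\cdot C=2$ for a general $C$ parametrised by $V$. By Corollary \ref{ccr} the normalisation $\sigma\colon U\to X$ of the universal family resolves $\phi$; let $q\colon U\to\p^n$ be the induced morphism and $\tilde C\subseteq U$ the strict transforms of the curves of $V$, which $q$ maps isomorphically onto the lines through $p=\phi(x)$. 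Writing $q^{*}\sh_{\p^n}(1)=\sigma^{*}A-F$ with $F\ge 0$ and intersecting with $\tilde C$ gives $F\cdot\tilde C=2-1=1$. Splitting $F=D+F_{0}$ into the part $D$ dominating $X$ (the fixed part of $W$) and the $\sigma$-exceptional part $F_{0}$, and noting that $F_{0}\cdot\tilde C$ is supported over $x$, one gets a dichotomy: either (i) $D$ is negligible (zero, or contracted by $\phi$) and the base scheme of $W$ meets a general conic of $V$ in one point lying over $x$; or (ii) $D\cdot C=1$, $x\notin D$, and $\phi$ is defined by the moving part $W'\subseteq|A-D|$ with $(A-D)\cdot C=1$ and $\mathrm{Bs}(W')$ of codimension $\ge 2$.

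Next I would exploit the conic structure. Embedding $X\hookrightarrow\p^N$ by $|A|$, the members of $V$ become smooth conics through the fixed point $x$, the family $V$ is parametrised birationally by $\p(T_xX)$, and, since $\sigma$ is birational, through $x$ and a general point of $X$ there passes exactly one such conic. Restricting the inverse map $\phi^{-1}\colon\p^n\dashrightarrow X\subseteq\p^N$ to a general line through $p$ (which avoids the codimension $\ge 2$ base locus and is carried isomorphically onto a degree-two conic) shows that $\phi^{-1}$ is, after removing a possible fixed linear component, given by a sublinear system of $|\sh_{\p^n}(2)|$ with base locus of codimension $\ge 2$; in the exceptional case where that fixed component is a hyperplane, $\phi$ is instead linear with base locus along a linear space and realises $X$ as $\bl_L(\p^n)$. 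Here one must also discard the degenerate possibility that the base locus is empty, which would give $X\cong\p^n$ and hence $\mathrm{ratdeg}(X)=1$.

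At this point $X$ is a conic-connected manifold admitting a birational parametrisation of $\p^n$ by quadrics of the above shape, and I would invoke the classification of such varieties -- the classification of conic-connected manifolds of Ionescu--Russo, equivalently that of special birational transformations of $\p^n$ of type $(2,\ast)$ -- to reduce to a short explicit list: quadrics, (projections of) the Veronese, Segre products and their hyperplane sections, blow-ups of $\p^n$ along a linear space, and a few homogeneous spaces. For each member one then computes the rationality degree: those isomorphic to $\p^n$ (the projective space itself and the projected Veroneses) are discarded as having degree $1$; the projective bundles $\mathbb P_{\p^1}(\bigoplus\sh(a_i))$ with some $a_i\ge 2$, and the remaining Fano/homogeneous candidates, are shown to have rationality degree $\ge 3$ by bounding from below the $A$-degree of any admissible covering family of conics through a point; and what survives is precisely a)--d). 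Conversely, for each of a)--d) I would exhibit a family $V$ achieving degree $2$: the conics cut on $Q^n$ by the $2$-planes through a fixed tangent direction at $x$ (equivalently the fibres of the stereographic projection from $x$); the bidegree $(1,1)$-curves through $x$ on $\p^k\times\p^h$ and on the general section $Z\in|\sh(1,1)|$; and the conics of ``$(1,1)$-type'' through the exceptional point on $\bl_L(\p^n)$ in its natural embedding -- checking in each case that $X\not\cong\p^n$, so that the degree is not $1$.

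The delicate part is the pruning step. Because the rationality degree is an intrinsic invariant, ruling out a candidate (say a scroll with a twist $\ge 2$, or a prime Fano manifold) requires controlling \emph{every} admissible covering family of rational curves through a point -- not merely one convenient family -- and showing that none of them has $A$-degree $2$ for any very ample $A$ hosting the associated birational map to $\p^n$; this is where most of the work lies. A secondary difficulty is the bookkeeping around fixed components and base loci in the first two steps, in particular isolating the case $X\cong\bl_L(\p^n)$ cleanly and making sure the list obtained from the quoted classification is exhaustive.
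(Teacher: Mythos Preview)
Your overall framework---embed $X$ by $|A|$, recognise it as conic-connected, quote the Ionescu--Russo classification, then verify that b), c), d) really have rationality degree $2$---is exactly the strategy of the paper, and your verification for b), c), d) is close to what the paper does. The difficulty, as you correctly sense, is the ``pruning'' of the Picard number one case, and here there is a genuine gap in your plan.

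The Ionescu--Russo classification (their Theorem~2.2) does \emph{not} hand you a finite explicit list when $\rho(X)=1$: it only tells you that $X$ is a Fano manifold with $\rho(X)=1$ and index $i(X)\ge (n+1)/2$. This class is neither finite nor classified, and it contains many rational varieties besides the quadric (Grassmannians, spinor varieties, complete intersections of low multidegree, etc.). Your proposal to ``compute the rationality degree of each candidate'' and to rule out ``the remaining Fano/homogeneous candidates'' by bounding the $A$-degree therefore has no list to run over; and invoking instead a classification of special birational transformations of type $(2,\ast)$ is not equivalent, since nothing in the setup forces the base scheme of $\phi^{-1}$ to be smooth and irreducible. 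This is precisely the step you flag as ``where most of the work lies'', but as stated it is not a workable plan.

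The paper treats the $\rho(X)=1$ case by a completely different, direct argument on $X$. Using that every effective divisor is ample when $\rho(X)=1$, it first shows the general member of the defining linear system $\Gamma\subset|A|$ is smooth (Claim~\ref{c1}), and then slices $X$ by $d-1$ general members of $\Gamma$, where $d=\codim_X Z$ and $Z=\mathrm{Bs}(\Gamma)$, to obtain a flag $X=X_0\supset X_1\supset\cdots\supset X_{d-1}$ of smooth varieties with restricted linear systems $\Gamma_i$. On $X_{d-1}$ the system $\Gamma_{d-1}$ acquires a fixed component and, after removing it, one shows (Claim~\ref{c2}) that the restricted map $\phi_{|X_{d-1}}\colon X_{d-1}\to\p^{n-d+1}$ is a \emph{morphism}. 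If $1\le d\le n-2$ then $\rho(X_{d-1})=1$ forces $X_{d-1}\cong\p^{n-d+1}$, and B\u{a}descu's theorem on ample divisors propagates this up the flag to give $X\cong\p^n$, a contradiction; if $d=n-1$ a short intersection computation on the surface $X_{n-2}$ together with the Eisenbud--Harris classification of varieties of minimal degree forces $X_{n-3}$ to be a scroll, again contradicting $\rho=1$. Hence $Z$ is a point, $A^n=2$, $h^0(X,A)=n+2$, and $X\cong Q^n$. None of these ingredients (the flag of hyperplane sections, B\u{a}descu, minimal degree varieties) appear in your plan, and they are what actually replaces the non-existent ``pruning''.
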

In particular, the family $V$ can be embedded in a family of conics in some projective space $\p^N$ and $X$ is a rational conic connected manifold as defined in \cite{IR}. The proof of this theorem uses the results of \cite{IR}. Cases b), c), and d) immediately follows from the classification of conic connected manifolds (Theorem 2.2, \cite{IR}). Case a) requires a deeper analysis of the base locus of the map $\phi^V$. 
Note that case a) shows that the hyperquadric $Q^n\subset\p^{n+1}$ is the only conic connected manifold with Picard number one such that the conics through its general point  $x$ are locally images of lines through a point $y\in \p^n$.

\begin{notation} Through all the paper we use standard notation as in \cite{Har77} and \cite{L}. All the varieties are defined over the field of the complex numbers $\C$. In particular, we adopt the following:\\

\noindent
 ${\rm Chow}_{rat}^1(X)$ \hspace{1cm} Chow variety parametrizing $1$-cycles with rational components.\\ 
 ${\rm Chow}_{rat}^1(X)_x$ \hspace{0.85cm} Chow variety parametrizing $1$-cycles with rational components containing\\
\hspace*{3.2cm} the point $x$.
\end{notation}

\begin{ack} I am extremely grateful to my Ph.D. advisor T. de Fernex for proposing me the problem, for his useful comments, suggestions and support.

\end{ack}

\section{Ionescu-Russo's criterion of rationality }

We start this section reproving the criterion given by Ionescu and Russo. Next we give the definition of {\em admitting a global system of parameters} for a point $x\in X$ and we prove that every point satisfying the assumptions of Theorem \ref{ir} admits a global system of parameters (Theorem \ref{cr}). We conclude the section showing how the projective space $\p^n$ can be characterized in view of Theorem \ref{ir} (Proposition \ref{Pnchar}). 

\begin{theorem}[Theorem 1.3, \cite{IR}] \label{ir} Let $X$ be a projective variety of dimension
$n$. Then $X$ is rational if and only if for some $x\in X_{reg}$ there exists an irreducible closed subscheme $V\subset{\rm Chow}_1^{rat}(X)_x$
such that:
\begin{itemize}
\item[$i)$] if $\mathcal{U}\rightarrow V$ is the universal family over $V$, then the tautological morphism $\mathcal{U}\rightarrow X$ is surjective.
\item[$ii)$] all the curves parametrized by $V$ are smooth at the point $x$.
\item[$iii)$] the general curve parametrized by $V$ is irreducible and is uniquely determined by its tangent vector at $x$.
\end{itemize}
\end{theorem}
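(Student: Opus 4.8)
The plan is to prove the two implications separately, the substantial one being that the existence of $V$ forces $X$ to be rational. For the forward implication, suppose $X$ is rational and fix a birational map $g\colon\p^n\dashrightarrow X$; I would pick $y\in\p^n$ general, so that $g$ is a local isomorphism at $y$ and the general line through $y$ avoids the indeterminacy locus of $g$ (of codimension $\ge 2$). With $x:=g(y)$, take $V$ to be the closure in $\Chow_1^{rat}(X)_x$ of $\{\,[g(\ell)]:\ell\text{ a line through }y\,\}$. Then $V$ is irreducible of dimension $n-1$ (the assignment $\ell\mapsto[g(\ell)]$ is generically injective since $\ell=\overline{g^{-1}(g(\ell))}$), its members are $1$-cycles with rational components through $x$, the curves cover a dense open subset of $X$ and hence, by properness of the universal family, all of $X$, they are smooth at $x$ because $g$ is a local isomorphism there, and since $dg_y$ is an isomorphism the curve is recovered from its tangent direction at $x$; this yields $i)$, $ii)$, $iii)$.

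For the converse, let $\U\to V$ be the universal family, $e\colon\U\to X$ the tautological (surjective) morphism, $U\to\U$ the normalization, and again $e\colon U\to X$ the induced morphism. As $V$ is irreducible with irreducible general fibre, $\U$ is irreducible of dimension $\dim V+1$, so surjectivity of $e$ gives $\dim V\ge n-1$. Since every curve $C_v$ is smooth, hence unibranch, at $x$, the marked point $x$ on every $C_v$ yields a section of $\U\to V$, which lifts to a section $\sigma\colon\overline V\to U$ of the projection $\mu\colon U\to\overline V$ ($\overline V$ the normalization of $V$), a morphism that is a $\p^1$-bundle over a dense open subset of $\overline V$; put $D:=\sigma(\overline V)$, a prime divisor with $e(D)=\{x\}$, and note $e^{-1}(x)=D$ set-theoretically. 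The tangent-direction map $\tau\colon\overline V\to\p(T_xX)\cong\p^{n-1}$ is generically injective by $iii)$, which together with $\dim V\ge n-1$ forces $\dim V=n-1$; hence $e$ is generically finite and $\tau$ is birational. In particular $\overline V$ is rational and $U$, being birational to $\overline V\times\p^1$ because it carries the section $\sigma$, is rational, so everything reduces to showing that $e$ is birational.

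To see $\deg e=1$ I would pass to $\beta\colon Y:=\bl_x X\to X$, with exceptional divisor $E\cong\p^{n-1}$. Taking strict transforms of the curves (legitimate since each is smooth at $x$) promotes $e$ to a dominant rational map $\tilde e\colon U\dashrightarrow Y$ with $\beta\circ\tilde e=e$, so $\deg\tilde e=\deg e$; moreover, under the birational identifications $D\cong\overline V\cong V$, the restriction $\tilde e|_D$ \emph{is} the tangent map $\tau$, hence is birational. At a general $u\in D$, then, $d\tilde e_u$ sends $T_uD$ isomorphically onto $T_{\tilde e(u)}E$, and since $C_{\mu(u)}$ is smooth at $x$ its strict transform crosses $E$ transversally, so $d\tilde e_u$ sends a vector transverse to $D$ to one transverse to $E$: thus $d\tilde e_u$ is an isomorphism and $\tilde e$ is unramified over the generic point of $E$. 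Finally, $\beta(E)=\{x\}$ and $e^{-1}(x)=D$ force $D$ to be the unique prime divisor of $U$ lying over $E$, so the fundamental identity $\sum_{D_i}e_if_i=\deg\tilde e$ (sum over prime divisors $D_i$ of $U$ lying over $E$) reduces to $e_Df_D=\deg\tilde e$; here $e_D=1$ by unramifiedness and $f_D=[\C(D):\C(E)]=1$ by birationality of $\tilde e|_D$, so $\deg\tilde e=1$. Hence $e$ is birational and $X$ is rational.

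The hard part is exactly this last paragraph: extracting $\deg e=1$ from $i)$–$iii)$. Blowing up $x$ is what keeps the contracted divisor $D=e^{-1}(x)$ from silently contributing extra sheets near $x$, while hypotheses $ii)$ and $iii)$ are precisely what force $\tilde e|_D$ to be birational and $\tilde e$ to be unramified along $E$; the remaining points — irreducibility and dimension bookkeeping, rationality of $U$, and the forward implication — are routine.
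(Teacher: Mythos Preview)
Your proof is correct and follows essentially the same approach as the paper: both directions are handled identically, and for the converse you blow up $x$, identify the restriction of the lifted map on the section with the tangent-direction map $\tau$ (hence birational onto $E$), deduce rationality of the universal family from the $\p^1$-bundle-with-section structure, and conclude birationality from unramifiedness along $E$. Your packaging of the last step via the ramification-degree formula $\sum e_i f_i=\deg\tilde e$ is a slightly more explicit rendering of the paper's differential argument, but the content is the same.
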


\begin{proof}
Assume $X$ is rational. Fix a birational map $X\dashrightarrow
\p^n$. Then the family of rational curves on $X$ induced by the
lines through a general point of $\p^n$ satisfies properties $i)$, $ii)$ and $iii)$ of the statement.

Conversely, suppose that for some $x\in X_{reg}$ there exists a family of rational curves parametrized by a closed subscheme $V\subset{\rm Chow}_1^{rat}(X)_x$ as in the statement.
 Let $u:\mathcal{U}\rightarrow V$ be the universal family and $\tau
:\mathcal{U}\rightarrow X$ be the tautological morphism. By
assumptions $\tau$ is surjective and $u$ admits a section
$\mathcal{S}$, which is contracted by $\tau$ to $x$. Let ${\rm Bl}_x (X)$ be the blow-up of $X$ at the point
$x=\tau(\mathcal{S})$, with exceptional divisor $E\cong\mathbb{P}^{n-1}$. Due to the smoothness assumption, the morphism $\tau$ lifts to a morphism $\widetilde{\tau} :\mathcal{U} \longrightarrow {\rm Bl}_x(X)$. Let $\widetilde{\tau}_{\s}
:\mathcal{S}\longrightarrow E \cong \mathbb{P}^{n-1}$ be the restriction of $\widetilde{\tau}$ to $\mathcal{S}$; in particular, a morphism $\nu :V\rightarrow E$ is well defined. The following picture summarizes the situation:
\begin{equation}\label{diagram1}
\xymatrix{
\mathcal{S} \ar@/^20pt/[rrr]^{\widetilde{\tau}_{\mathcal{S}}} \ar@{^{(}->}[r] & \U \ar[d]_u \ar[r]^{\widetilde{\tau}} \ar[rd]^{\tau} & {\rm Bl}_x(X) \ar[d] & E \cong \mathbb{P}^{n-1} \ar@{_{(}->}[l] \\
&V \ar@/_25pt/[urr]_{\nu} & X
}
\end{equation}
By the surjectivity of the map $\tau$ and by condition $iii)$ of the statement, it follows that the morphism $\widetilde{\tau}_{\s}$ is birational. In particular,
$\mathcal{S}$ is rational.\\
Now consider the normalization $U$ of
$\U$. This is generically a $\mathbb{P}^{1}$-bundle over the
rational variety $V$ with a section. This shows that the variety $\U$ is rational.\\
To conclude the proof we have to show that
the morphism $\widetilde{\tau}:\U\longrightarrow {\rm Bl}_x(X)$ is
generically one to one. To this end, consider the tangent space $T_v
(\U)=T_v(\mathcal{S})\oplus (\mathcal{N}_{\mathcal{S}|
\U})_{|v}$ at a
general point $v \in \mathcal{S}$. Since the induced map ${\rm d} \widetilde{\tau}:T_v
(\U)\longrightarrow T_{\widetilde{\tau}(v)}{\rm Bl}_x(X)$
splits in the two invertible linear maps
$T_v(\s)\rightarrow T_{\widetilde{\tau}(v)}({\rm Bl}_x(X))$ and $(\mathcal{N}_{\mathcal{S}|U})_{|v}\rightarrow T_{\widetilde{\tau}(v)}({\rm Bl}_x(X))$, whose images in $T_{\widetilde{\tau}(v)}({\rm Bl}_x(X))$ intersect only at $0$, the map $\widetilde{\tau}$ does not ramify along $E$. This and the assumption that all the curves parametrized by $C$ are smooth at $x$ show that $\widetilde{\tau}$ is birational.
\end{proof}

\begin{re}  As pointed out in \cite{IR}, if one weakens the condition $ii)$ by only requiring that the general curve parametrized by $V$ is smooth at $x$, then one gets a criterion for unirationality.
\end{re}

\begin{definition} We say that a point $x$ contained in the regular locus of a rational variety $X$ \emph{admits a global system of parameters} if there exists a birational map $\phi : X\dashrightarrow \p^n$ that restricts to an isomorphism in an open neighborhood of $x$.
\end{definition}

\begin{re}  A smooth rational variety for which every point admits a global system of parameters is called {\it uniformly rational} (cfr. \cite{BB}). The question whether all smooth rational varieties are uniformly rational was first raised by Gromov in \cite{G}. Recently, Karzhemanov showed that this is not the case providing an example of a smooth rational fivefold that is not uniformly rational (cfr. \cite{Ka}).

\end{re}

\begin{theorem}\label{cr} Let $X$ be a projective variety of dimension $n$. Suppose that a family of rational curves $V$ satisfies the hypotheses of Theorem \ref{ir} at some point $x\in X_{reg}$. Then $x$ admits a global system of parameters,  or equivalently, the local ring of $X$ at $x$ is $\C$-isomorphic to the local ring of a point $y \in \p^n$. In particular, general curves of the family naturally identify locally to
images of lines in $\p^n$ passing through $y$.
\end{theorem}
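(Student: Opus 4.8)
The plan is to build, directly from the family $V$, a birational map $\phi\colon X\dashrightarrow\p^n$ sending $x$ to a point $y$ and the irreducible curves of $V$ to the lines of $\p^n$ through $y$, and then to use the smoothness of $X$ at $x$ to force $\phi$ to be a local isomorphism at $x$; equivalently, to produce Zariski open neighbourhoods $x\in W_0\subseteq X$ and $y\in W_0'\subseteq\p^n$ with $W_0\cong W_0'$.

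First I would exploit the geometry already produced in the proof of Theorem \ref{ir}. Writing $\pi\colon\bl_x(X)\to X$ for the blow-down, and passing to the normalization $U$ of $\mathcal U$, the morphism $\sigma\colon U\to\bl_x(X)$ is birational, $u$ lifts to $p\colon U\to V$ which is a $\p^1$-bundle over a dense open of $V$ with $\mathcal S$ lifting to a section $S$ of $p$, one has $\sigma^{-1}(E)=S$, and $\sigma|_S$ coincides with the morphism $\nu\colon V\to E\cong\mathbb P(T_xX)$ recording the tangent direction at $x$, which is birational by $iii)$. Performing the same construction on $(\p^n,y)$ with the lines through $y$ gives tautologically $\bl_y(\p^n)\cong\mathbb P\big(\mathcal O_{\p^{n-1}}\oplus\mathcal O_{\p^{n-1}}(1)\big)\to\p^{n-1}=\mathbb P(T_y\p^n)$, with blow-down $\pi_0$ and with exceptional divisor $E_0$ the section of normal bundle $\mathcal O_{\p^{n-1}}(-1)$. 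Now $X$ smooth at $x$ gives $\mathcal N_{E|\bl_x(X)}\cong\mathcal O_{\p^{n-1}}(-1)$, and $\sigma$ is an isomorphism near the generic point of $E$ (there its only preimage is the generic point of $S$, and $\bl_x(X)$ is normal near $E$, so this follows from Zariski's main theorem). Over the dense open of $V$ on which $\nu$ is an open immersion and $p$ a $\p^1$-bundle with section, a comparison of normal bundles then identifies $U$ with the $\nu$-pullback of $\bl_y(\p^n)\to\p^{n-1}$, carrying $S$ to $E_0$. Combining this with $\sigma$ and the two blow-downs assembles into a birational map $\phi\colon X\dashrightarrow\p^n$, resolved by $U$, with $\phi(x)=y$ and with every irreducible $C_v$ mapped to the line through $y$ in direction $\nu(v)$; this is already the ``in particular'' of the statement.

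It remains to upgrade $\phi$ to a genuine local isomorphism at $x$, i.e.\ to show that the induced map $\widetilde\phi\colon\bl_x(X)\dashrightarrow\bl_y(\p^n)$ is regular along all of $E$ and restricts to an isomorphism of an open neighbourhood $W\supseteq E$ onto an open neighbourhood $W'\supseteq E_0$; since the blow-down of a neighbourhood of a full exceptional divisor is intrinsic, this descends to an isomorphism of $W_0:=\pi(W)$ onto $W_0':=\pi_0(W')$, which are open neighbourhoods of $x$ and $y$ as $W\supseteq\pi^{-1}(x)$. Here smoothness of $X$ at $x$ enters decisively: a neighbourhood of $E$ in $\bl_x(X)$ is analytically isomorphic, over $X$, to a neighbourhood of the zero section of $\mathrm{Tot}(\mathcal O_{\p^{n-1}}(-1))=\bl_0(\mathbb A^n)$, and in such a neighbourhood the algebraic family $\{C_v\}$---each member smooth at $x$, the general one determined by its tangent at $x$---is exactly the family of fibres of $\mathcal O_{\p^{n-1}}(-1)\to\p^{n-1}$; since $\phi$ carries these fibres to the lines through $y$, it agrees near $E$ with this model isomorphism, and because a rational function analytically regular at a point of a normal variety is regular there, $\widetilde\phi$ is a morphism along $E$ and an isomorphism there. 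I expect the main obstacle to be precisely this step---promoting the a priori merely generic identification coming from $\sigma$ and $\nu$ to an honest isomorphism along all of $E$; it is exactly here that smoothness of $X$ at $x$ cannot be dropped, since by Karzhemanov's example in \cite{Ka} rationality of $X$ alone does not suffice, the family $V$ being what provides enough control on the curves through $x$ to close the argument.
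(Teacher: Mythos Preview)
Your outline for producing the birational map $\phi:X\dashrightarrow\p^n$ from $V$ is fine and close to the paper's set-up. The gap is exactly where you flag it: upgrading $\phi$ to a local isomorphism at $x$. Your argument rests on the assertion that, in an analytic tubular neighbourhood of $E$, ``the algebraic family $\{C_v\}$ is exactly the family of fibres of $\sh_{\p^{n-1}}(-1)\to\p^{n-1}$''. But the hypotheses only control the $C_v$ to first order at $x$: smoothness forces the strict transforms to meet $E$ transversally, and condition $iii)$ says the tangent direction generically determines the curve, yet neither forces the germ of $C_v$ to be a straight line in any chosen analytic chart. Asking for coordinates in which all the $C_v$ become lines is essentially asking that $\phi$ already be a local analytic isomorphism at $x$, so the argument is circular. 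Note also that a generic statement is genuinely not enough here: if $\widetilde\phi$ is an isomorphism only on an open $W\subset\bl_x(X)$ with $W\cap E$ dense in $E$ but $E\not\subset W$, then $\pi(W)$ need \emph{not} be open in $X$ (remove the strict transform of one line from $\bl_0(\mathbb A^2)$ to see this), so one cannot descend. A smaller issue earlier: equality of the normal bundles of the two sections does not by itself identify the two $\p^1$-bundles over $V^0$, since the extension class lives in $H^1(V^0,\sh(1))$, which has no reason to vanish on an arbitrary open of $\p^{n-1}$.

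The paper closes the gap by a global, cohomological construction rather than an extension argument. After passing to a smooth model $Y\to V'$ with section $\E$ and setting $D:=p^*v^*(H)$ for a hyperplane $H\subset E\cong\p^{n-1}$, it shows $\sh_{\E}(D+\E)\cong\sh_{\E}$, then $H^1(Y,\sh_Y(\E))=0$, and by an inductive slicing argument $h^0(Y,\sh_Y(D+\E))=n+1$. This exhibits $|D+\E|$ as a base-point-free linear system defining an honest \emph{morphism} $\varphi:Y\to\p^n$ on all of $Y$, contracting $\E$ to a point $p$ and sending each fibre of $Y\to V'$ isomorphically to a line through $p$. Once $\varphi$ is everywhere defined, the conclusion is immediate: since $\psi^{-1}(x)=\E$ and $|D+\E|$ visibly contains a member disjoint from $\E$, the pushed-forward system on $X$ has no base point at $x$, so $\phi$ is defined there; the symmetric observation $\varphi^{-1}(p)=\E$ gives that $\phi^{-1}$ is defined at $p$, hence $\phi$ is a local isomorphism at $x$. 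It is precisely this explicit construction of $\varphi$ via $|D+\E|$ that substitutes for your attempted analytic extension step.
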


\begin{proof} We keep the same notation as in the proof of Theorem~\ref{ir}. Let $U\rightarrow \U$ be the normalization of $\U$. Let  $g : V'\rightarrow  V$ be a resolution of singularities and let $Z:= U\times_V V' $ be the variety obtained by base change.

\begin{equation}
\xymatrix{%
 Z:= U\times_V V'  \ar[d]^{p}  \ar[r] &        U \ar[dr]  \ar[d]  \ar[r]& \bl_x(X)\ar[d]\\
                 V'         \ar[r]^g          &      V &  X
}
\end{equation}
 
Note that $Z$ is irreducible. If $V^0\subset V'$ is the open subset where $g$ restricts to an isomorphism and $U^0$ is the universal family over $g(V^0)$, then $U_0$ embeds in an irreducible component $Z_1$ of $Z$. By universal property of the fibred product, the projection $p : Z\rightarrow V'$ admits a section $\E'\cong V'$, hence irreducible. Note that $\E'$ is contained in $Z_1$. For any point $v\in V'$ the fiber $p^{-1}(v)$ is isomorphic to the rational 1-cycle parametrized by the point $g(v)\in V$. We need to show that the fiber at any point $v\in V'$ belongs to $Z_1$.  Fix a point $v\in V'$. Let $C$ be a smooth curve through $v$ such that $C$ is not contracted by $g$ and such that $g(u)$ parametrizes an irreducible cycle if $u$ is general in $C$. Let $S$ be the surface over $C$ obtained by base change from the universal family over $g(C)$. Since the surface $S$ is irreducible and $S$ embeds in $Z_1$, it follows that $Z=Z_1$.
 
Note also that $\E'$ is the pull-back on $Z$ of the exceptional divisor $E\subset \bl_x(X)$. In particular, $Z$ is smooth along $\E'$ (being $\E'$ a smooth Cartier divisor). Let $Y\rightarrow Z$ be a resolution of singularities that is an isomorphism in a neighborhood of $\E'$ and let $\E$ be the pull-back on $Y$ of $\E'$.

Let us fix some notation. Let $\psi: Y \rightarrow X$ (rest. $\widetilde{\psi}: Y \rightarrow \bl_x(X)$) be the morphism induced by the tautological morphism $\tau: \U \rightarrow X$ (resp. by the lift of the tautological morphism $\widetilde{\tau}: \U \rightarrow \bl_x(X)$). Let $p: Y \rightarrow V'$ be the natural projection and let $v: V'\rightarrow \p^{n-1}$ be the morphism induced by $\nu : V\rightarrow \p^{n-1}$.

We want to construct a birational morphism $\varphi:Y\rightarrow\p^n$ such that: 
\begin{itemize}
\item[i)] the section $\mathcal{E}$ is contracted to a point $p\in\p^n$; 
\item[ii)] if we look at the morphism $\widetilde{\varphi}_{\mathcal{E}}:\mathcal{E}\rightarrow F\cong\mathbb{P}^{n-1}\subset{\rm Bl}_p(\p^n)$ (from $\mathcal{E}$ to the exceptional divisor $F$ of the blow-up of $\p^n$ at the point $p$, induced by the lifting $\widetilde{\varphi}:Y\rightarrow {\rm Bl}_p(\p^n)$), then $\widetilde{\varphi}_{\E}=\widetilde{\psi}_{\mathcal{E}}$ modulo isomorphisms of $E$ and $F$; 
\item[iii)] the morphism $\varphi$ maps every irreducible component of the fibers of $Y\rightarrow V'$, that intersects $\E$, isomorphically to a line.
\end{itemize}
 
\begin{clam} A morphism $\varphi$ satisfying properties $i)$, $ii)$ and $iii)$ induces a birational map  $\phi : X\dashrightarrow\p^n$ 
\begin{equation}\label{diagram2}
\xymatrix{%
 {\rm Bl}_p(\p^n) \ar[d]& \ar[l]^{\widetilde{\varphi}}Y\ar[rd]^{\psi}\ar[ld]^{\varphi}  \ar[r]^{\widetilde{\psi}}& {\rm Bl}_x(X)\ar[d]  \\
\p^n &  & \ar@{-->}[ll]^{\phi}  X
}
\end{equation}
that is an isomorphism in an open neighborhood of $x$.
\end{clam}
\begin{proof} Let $\mathcal{H}$ be the complete linear system $H^0(\p^n,\sh (1))$. Note that the map $\phi :X \dashrightarrow \p^n$ is defined by the birational transform $\phi_*^{-1}(\mathcal{H})$ of $\mathcal{H}$. Note that $\phi_*^{-1}(\mathcal{H})=\psi_*(\varphi^* (\mathcal{H}))$, here we use that $\mathcal{H}$ is globally generated. Since $\varphi^*(\mathcal{H})$ contains a divisor disjoint from $\E$ and the inverse image $\psi^{-1}(x)$ of $x$ is $\E$, the map $\phi$ is defined at $x$ and $\phi (x)=p$. Looking at the birational transform on $\p^n$ of the complete linear system $H^0(X,\sh_X (1))$, we get that $\phi^{-1}$ is defined at $p$ and $\phi^{-1}(p)=x$. In particular, $\phi$ is an isomorphism in a neighborhood of $x$.
\end{proof}

The rest of the proof is dedicated to defining the morphism $\varphi$. Let $H$ be a general hyperplane in $E$ and $D=p^*(v^*(H))$ be the pull back on $Y$ of $H$. We want to show that the morphism $\varphi$ exists and it is given by the complete linear system $|D + \mathcal{E}|$. We proceed by steps.
\begin{step}  We want to show that $\sh_{\E} (D+\E)\cong\sh_{\E}$.

To this end consider the following commutative diagram:
$$
\xymatrix{%
V'\cong \E \ar[r]^{\widetilde{\psi}_{\mathcal{E}}} \ar[d]^{j} & E\cong \p^{n-1}\ar[d]^{i}\\
Y \ar[r]^{\widetilde{\psi}} & {\rm Bl}_x(X)
}
$$
where $i$ and $j$ are the natural inclusions. Observe that $\sh_{\E}(\E)\cong j^*\sh_{Y}(\E)\cong j^*(\widetilde{\psi}^*\sh_{{\rm Bl}_x(X)}(E))$. By the commutativity of the diagram and the functoriality of the pull-back in the morphisms, we get that $j^*(\widetilde{\psi}^*\sh_{{\rm Bl}_x(X)}(E))\cong \widetilde{\psi}_{\E}^*(i^*\sh_{{\rm Bl}_x(X)}(E))$. Recalling that $\sh_{\E}(D)\cong \widetilde{\psi}_{\E}^*\sh_E(H)$, we get that
$$\sh_{\E} (D+\E)\cong \widetilde{\psi}_{\E}^*\sh_E(H+E)\cong \sh_{\E}.$$
\end{step}

\begin{step} We want to show that $H^1(Y,\sh_{Y}(\mathcal{E}))=0$.

Using the projection formula, we have:
$$\widetilde{\psi}_{\mathcal{E}*}(\sh_{\E}\otimes\widetilde{\psi}_{\mathcal{E}}^*\sh_{\mathbb{P}^{n-1}}(-1))=\widetilde{\psi}_{\mathcal{E}*}\sh_{\E}\otimes \sh_{\mathbb{P}^{n-1}}(-1).
$$
Using projection formula for higher image sheaves, we get:
$$R^i(\widetilde{\psi}_{\mathcal{E}*}(\sh_{\E}\otimes\widetilde{\psi}_{\mathcal{E}}^*\sh_{\mathbb{P}^{n-1}}(-1)))=R^i\widetilde{\psi}_{\mathcal{E}*}\sh_{\E}\otimes \sh_{\mathbb{P}^{n-1}}(-1)=0
$$
for $i>0$, because $R^i\widetilde{\psi}_{\mathcal{E}*}\sh_{\E}=0$ ($E$ being smooth, and hence only with rational singularities). Recalling that $\sh_{\mathcal{E}}(\sh_{\E}\otimes\widetilde{\psi}_{\mathcal{E}}^*\sh_{\mathbb{P}^{n-1}}(-1))=\sh_{\mathcal{E}}(-D)=\sh_{\mathcal{E}}(\mathcal{E})$,
we get that $H^1(\mathcal{E},\sh_{\mathcal{E}}(\mathcal{E}))=H^1(\mathbb{P}^{n-1},\sh_{\mathbb{P}^{n-1}}(-1))=0$ (cfr. Exercise $8.1$, Chapter III of \cite{Har77}).

If we look at the long exact sequence in cohomology induced by the following short exact sequence:
$$0\rightarrow \sh_{Y}\rightarrow \sh_{Y}(\mathcal{E})\rightarrow \sh_{\mathcal{E}}(\mathcal{E})\rightarrow 0,
$$
we get that $H^1(Y,\sh_{Y}(\mathcal{E}))=0$ (since $H^1(Y,\sh_{Y})=0$, $Y$ being smooth and rational, and since $H^1(\mathcal{E},\sh_{\mathcal{E}}(\mathcal{E}))=0$, from previous discussion).\\
\end{step}
\begin{step} We want to show that $H^0(Y,\sh_{Y}(D+\mathcal{E}))=n+1$. In particular, the map $H^0(Y,\sh_{Y}(D+\mathcal{E}))\rightarrow H^0(C,\sh_{C}(D+\mathcal{E}))$ is surjective, where $C\cong \p^1$ is the general fiber of $p : Y\rightarrow V'$.

Since $\sh_{Y}(D)$ is globally generated, by Bertini's Theorem we can choose a general section $D^1\in |D|$ that is smooth. By construction a smooth section of $|D|$ has to be irreducible. From Step 2 and from the piece of long exact sequence induced in cohomology by the following short exact sequence:
$$
\xymatrix{%
0\ar[r] & \sh_{Y}(\mathcal{E}) \ar[r]^{\cdot D^1} & \sh_{Y}(D+\mathcal{E})\ar[r] & \sh_{D^1}(D+\mathcal{E})\ar[r] & 0, \\
}
$$
we get that
$$H^0(Y,\sh_{Y}(D+\mathcal{E}))=H^0(D^1,\sh_{D^1}(D+\mathcal{E}))+1.$$
Replacing $Y$ with $D^1$, $\mathcal{E}$ with $\mathcal{E}_{|D^1}$, ${\rm Bl}_x(X)$ with $\widetilde{\psi}(D^1)$ and $E$ with $E_{\widetilde{\psi}(D^1)}$, and using the same argument as above we get that
$$H^0(D^{1},\sh_{D^{1}}(D+\mathcal{E}))=H^0(D^{2},\sh_{D^{2}}(D+\mathcal{E}))+1,$$
where $D^2$ is the general (hence smooth and irreducible) section of $H^0(D^{1},\sh_{D^{1}}(D+\mathcal{E}))$.\\
Proceeding by induction on the dimension and observing that $n-1$ general sections of $H^0(Y,\sh_{Y}(D))$ intersect in a general fiber $C$ of $p : Y\rightarrow V'$, we get that:
$$H^0(Y,\sh_{Y}(D+\mathcal{E}))=H^0(C,\sh_{C}(D+\mathcal{E}))+n-1=n+1,$$
being $C\cong \p^1$ and $\sh_{C}(D+\mathcal{E})\cong \sh_{\p^1}(1)$.\\
\end{step}
\begin{step} We want to show that the linear system $|D+\mathcal{E}|$ defines the morphism we are looking for.

Without loss of generality we can choose $<ed_1,\ldots{},ed_n,s>$ as set of generators for $H^0(Y,\sh_{Y}(D+\mathcal{E}))$; where $<d_1,\ldots{},d_n>$ is a generating set for $H^0(Y,\sh_{Y}(D))$, $e \in H^0(Y,\sh_{Y}(\mathcal{E}))$ and $s\in H^0(Y,\sh_{Y}(D+\mathcal{E}))$ is a section not vanishing on $\mathcal{E}$.\\
It is clear that the set of sections $<ed_1,\ldots{},ed_n,s>$ defines a morphism ${\varphi}_{|D+\mathcal{E}|}: Y\rightarrow \p^n$ that contracts the divisor $\mathcal{E}$ to a point. Moreover there exists a lift $\widetilde{\varphi}_{|D+\mathcal{E}|}: Y\rightarrow {\rm Bl}_p(\p^n)$, where $p=\varphi_{|D+\mathcal{E}|}(\mathcal{E})$. To conclude the proof we need to show that $\varphi_{|D+\mathcal{E}|}$ is generically one to one.\\
By Step 3, the morphism $\varphi_{|D+\mathcal{E}|}$ restricted to the general fiber $C$ of $p : Y\rightarrow V'$ is an isomorphism (more precisely it sends $C$ in a line of $\p^n$). Moreover, if we call $\varphi_{|D|}: Y\rightarrow \p^{n-1}$ the morphism associated to the set of sections $<d_1,\ldots{},d_n> \in H^0(Y,\sh_{Y}(D))$, we have the following commutative diagram:
$$
\xymatrix{%
 Y \ar[rr]^{\widetilde{\varphi}_{|D+\mathcal{E}|}} \ar[dr]^{\varphi_{|D|}}&      & {\rm Bl}_p(\p^n) \ar[dl]  \\
       & \p^{n-1} &
}
$$
i.e., the morphism $\widetilde{\varphi}_{|D+\mathcal{E}|}$ is defined over $\p^{n-1}$. This shows that $\varphi_{|D+\mathcal{E}|}$ is birational and it concludes the proof.\end{step}\end{proof}

\begin{corollary}\label{ccr} If $V$ is a family of rational curves as in Theorem \ref{cr}, then $V$ determines a unique (modulo isomorphisms of $\p^n$) birational map $\phi^V:X\dashrightarrow \p^n$ that maps the general curve parametrized  by $V$ into a line through a point $p=\phi^V(x)$. Moreover, the normalization $U\rightarrow \U$ of the universal family $\U\rightarrow V$ is a resolution of the birational map $\phi^V$.
\end{corollary}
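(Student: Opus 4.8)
The plan is to extract the map $\phi^V$ and all of its asserted properties directly from the construction carried out in the proof of Theorem~\ref{cr}, and then to add a short argument for uniqueness. I would take $\phi^V$ to be the birational map $\varphi\circ\psi^{-1}:X\dashrightarrow\p^n$ produced there, where $\varphi=\varphi_{|D+\mathcal E|}:Y\to\p^n$ and $\psi:Y\to X$. By the Claim in that proof $\phi^V$ is an isomorphism in a neighbourhood of $x$, so $p:=\phi^V(x)$ is well defined; and since $\psi^{-1}(x)=\mathcal E$ while $\varphi$ contracts $\mathcal E$ to a point, $p=\varphi(\mathcal E)$. To see that $\phi^V$ carries the general curve of $V$ to a line through $p$, I would trace the diagram \eqref{diagram2}: for a general fibre $C\cong\p^1$ of $p:Y\to V'$, the morphism $\psi$ maps $C$ onto the general (irreducible) curve $\Gamma$ parametrized by $V$, and the last step of the proof of Theorem~\ref{cr} shows that $\varphi(C)$ is a line through $p$; since $\phi^V\circ\psi=\varphi$ as rational maps and $\psi|_C$ is dominant onto $\Gamma$, it follows that $\phi^V(\Gamma)=\varphi(C)$. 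The same diagram records the finer statement that the components of the fibres of $p$ meeting $\mathcal E$ go to lines through $p$.

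For uniqueness modulo $\Aut(\p^n)$, I would first note that the construction of Theorem~\ref{cr} involves only the choice of a basis of the $(n+1)$-dimensional space $H^0(Y,\sh_Y(D+\mathcal E))$, together with auxiliary resolutions that dominate one another; hence two maps obtained this way differ by an element of $PGL_{n+1}=\Aut(\p^n)$. The substantive point is that any birational $\phi':X\dashrightarrow\p^n$ which is an isomorphism near $x$ and sends the general curve of $V$ to a line through a point equals $\alpha\circ\phi^V$ for some $\alpha\in\Aut(\p^n)$. To prove this I would set $h:=\phi'\circ(\phi^V)^{-1}:\p^n\dashrightarrow\p^n$, which is birational, an isomorphism at $p$, and sends every line through $p$ to a line through $h(p)$. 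So everything reduces to the following lemma, which I expect to be the main obstacle: a birational self-map of $\p^n$ that is a local isomorphism at a point and sends every line through that point to a line is a projective linear transformation.

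I would prove the lemma as follows. Compose $h$ on the left with the element of $\Aut(\p^n)$ that carries $h(p)$ back to $p$ and has differential $(dh_p)^{-1}$ there — this exists because the stabiliser of a point in $PGL_{n+1}$ surjects onto the general linear group of its tangent space — so that we may assume $h(p)=p$, $dh_p=\mathrm{id}$, and therefore that $h$ maps every line through $p$ to itself. Write $h=[G_0:\dots:G_n]$ in homogeneous coordinates with $p=[1:0:\dots:0]$, the $G_i$ forms of a common degree $d$ with no common factor. The condition that each line through $p$ go to itself forces $[G_1:\dots:G_n]=[x_1:\dots:x_n]$ as rational maps, so $G_i=x_iQ$ for $i\ge1$ with $Q$ a common form of degree $d-1$ and $\gcd(G_0,Q)=1$; moreover $G_0(p)\ne0$ because $h$ is defined at $p$, and $Q(p)\ne0$ because $h$ is an isomorphism at $p$. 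Factoring the projection from $p$ through $h$ and comparing degrees, birationality of $h$ forces its restriction to a general line $\ell$ through $p$ to be an automorphism of $\ell\cong\p^1$; since $G_0$ does not vanish at $p$, this can happen only if $Q|_\ell$ divides $G_0|_\ell$. Letting $\ell$ range over all lines through $p$, whose union is $\p^n$, I conclude that $G_0$ vanishes along $\{Q=0\}$, hence the squarefree part of $Q$ divides $G_0$; together with $\gcd(G_0,Q)=1$ this forces $Q$ to be a constant, i.e. $d=1$ and $h$ projective linear. Then $\phi'=h\circ\phi^V$ with $h\in\Aut(\p^n)$, as required.

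For the resolution statement I would observe that $D+\mathcal E$ on $Y$ is the pull-back of the divisor $D_U+\mathcal S$ on $U$, where $\mathcal S$ is the section of $U\to V$ and $D_U$ is the pull-back of $\sh_E(1)$ under $U\to V\xrightarrow{\nu}\p^{n-1}$. Since $U$ is normal we have $\pi_*\sh_Y=\sh_U$ for $\pi:Y\to U$, so the projection formula gives $H^0(U,\sh_U(D_U+\mathcal S))=H^0(Y,\sh_Y(D+\mathcal E))$, and $|D_U+\mathcal S|$ is base-point free, since a base point of it would pull back to one of the base-point-free system $|D+\mathcal E|$. Hence $|D_U+\mathcal S|$ defines a morphism $U\to\p^n$ whose composition with $\pi$ is $\varphi$, so this morphism is birational; together with the tautological morphism $U\to\U\to X$ it exhibits $U$ as a resolution of $\phi^V$.
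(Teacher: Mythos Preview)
Your proposal is correct and follows the same architecture as the paper: take $\phi^V$ from the construction in Theorem~\ref{cr}, read off that general curves go to lines through $p$, prove uniqueness by showing the induced self-map of $\p^n$ is linear, and deduce the resolution statement from the fact that $|D+\mathcal E|$ descends to $U$. The paper's proof is much terser---for uniqueness it simply asserts that a birational $f:\p^n\dashrightarrow\p^n$ sending the general line through $p$ to a line is an automorphism, and for the resolution it just says the linear system is pulled back from $U$---so your added detail is all to the good.

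One remark on your uniqueness lemma: there is a shorter route than your coordinate computation. Since $h$ is defined at $p$, the base locus of $h$ (which has codimension $\ge 2$) misses the general line $\ell$ through $p$; hence $h(\ell)$ is a curve of degree equal to the common degree $d$ of the forms defining $h$, and $h(\ell)$ being a line forces $d=1$. This is presumably what the paper has in mind, and it avoids the divisibility analysis of $Q$ and $G_0$.
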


\begin{proof} Let $\phi^V:X\dashrightarrow \p^n$ be the birational map defined in the proof of Theorem \ref{cr}. To see that $U\rightarrow V$ is a resolution of $\phi^V$, it is enough to observe that the linear system $|D+\E |$, defining the morphism $\varphi : Y\rightarrow \p^n$, is the pull-back of a linear system on $U$. 

Assume that there exists a birational map $\phi' :X\dashrightarrow\p^n$ that maps the general curve parametrized by $V$ into a line through a point $p'=\phi'(x)$.
$$
\xymatrix{%
      \p^n \ar@{-->}@/^20pt/[rr]_{f} & X \ar@{-->}[l]^{\phi^V} \ar@{-->}[r]_{\phi'} & \p^n & \\
}
$$
The two birational morphisms $\phi^V$ and $\phi'$ induce a birational map $f : \p^n\dashrightarrow \p^n$. Since the general line through the point $p$ is sent to a line through the point $p'$, the map $f$ is an isomorphism. This concludes the proof of the Corollary.
\end{proof}

\begin{prop}\label{Pnchar} Let $X$ be a normal projective variety of dimension $n$. Then $X\cong \p^n$ if and only if there exists a family of rational curves
$V\subset {\rm Chow}_1^{rat}(X)_x$ satisfying the hypothesis of Theorem \ref{ir} at some point $x\in X_{reg}$ such that all the members of $V$ are irreducible.
\end{prop}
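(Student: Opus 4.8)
The forward implication is immediate: if $X\cong\p^n$, the family $V$ of all lines through a fixed point $y\in\p^n$ has irreducible members and satisfies conditions $i)$, $ii)$, $iii)$, exactly as at the start of the proof of Theorem~\ref{ir}. So the content is the converse. Assuming a family $V$ as in the statement with every member irreducible, Theorem~\ref{ir} gives that $X$ is rational, and Theorem~\ref{cr} together with Corollary~\ref{ccr} produces the birational map $\phi:=\phi^V\colon X\dashrightarrow\p^n$ and a resolution of it: the smooth model $Y$ of the proof of Theorem~\ref{cr}, with morphisms $\psi\colon Y\to X$, $\varphi\colon Y\to\p^n$, the projection $p\colon Y\to V'$, a birational morphism $v\colon V'\to\p^{n-1}$, and the section $\mathcal E\cong V'$ of $p$, contracted by $\varphi$ to $p_0:=\phi(x)$ and by $\psi$ to $x$. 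I would use freely that $\varphi^{*}\sh_{\p^n}(1)\cong\sh_Y(D+\mathcal E)$ with $D=p^{*}v^{*}H$ ($H$ a hyperplane), that $\phi$ is an isomorphism on a neighbourhood of $x$, and that the general fibre of $p$ is a $\p^1$ that $\varphi$ maps isomorphically onto a line through $p_0$ (Steps~3 and~4 in the proof of Theorem~\ref{cr}).

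The plan is to reduce everything to the single assertion that the exceptional locus of $\varphi$ is exactly $\mathcal E$. Granting this, $\varphi^{-1}(p_0)$ is connected (normality of $\p^n$, so $\varphi$ has connected fibres), hence equals $\mathcal E$, and $\varphi$ restricts to a proper, quasi-finite, birational morphism $Y\setminus\mathcal E\to\p^n\setminus\{p_0\}$ onto the smooth, hence normal, target, so it is an isomorphism there. Then $\phi^{-1}=\psi\circ\varphi^{-1}$ is a morphism on $\p^n\setminus\{p_0\}$, and, being also an isomorphism near $p_0$, it extends to a morphism $\phi^{-1}\colon\p^n\to X$. Since $\Pic{\p^n}=\mathbb Z\cdot\sh(1)$, the pull-back to $\p^n$ of an ample divisor on $X$ is a positive multiple of $\sh(1)$ and so has positive degree on every curve; hence $\phi^{-1}$ contracts no curve, and a finite birational morphism onto a normal variety is an isomorphism. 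Thus $X\cong\p^n$.

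To pin down $\mathrm{Exc}(\varphi)$ I would take an irreducible curve $\gamma\subset Y$ with $\varphi(\gamma)$ a point, i.e.\ $(D+\mathcal E)\cdot\gamma=0$, and, assuming $\gamma\not\subseteq\mathcal E$, derive a contradiction. If $\gamma$ is contained in a fibre $F$ of $p$, then $F$ is irreducible --- this is the one place where the irreducibility of all members of $V$ is used --- so $\gamma=F$; but $p$ is flat ($Y$, $V'$ smooth and $p$ equidimensional) and the general fibre has degree $1$ against $D+\mathcal E$, so $(D+\mathcal E)\cdot\gamma=1$, a contradiction. If instead $\gamma$ dominates an irreducible curve $B\subseteq V'$, then for general $v\in B$ it meets $F_v\cong\p^1$, which $\varphi$ maps isomorphically onto a line $\ell_v$ through $p_0$; hence $\varphi(\gamma)$ lies on all of the infinitely many distinct lines $\ell_v$, $v\in\overline B$, and since the only point lying on more than one line through $p_0$ is $p_0$ itself, $\varphi(\gamma)=p_0$; then $\gamma\cap F_v$ must be the unique point of $F_v$ over $p_0$, namely $\mathcal E\cap F_v$, so $\gamma$ agrees with the section over the generic point of $B$ and $\gamma\subseteq\mathcal E$, again a contradiction. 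Hence every $\varphi$-contracted curve lies in $\mathcal E$; as $\mathcal E$ is itself $\varphi$-contracted, $\mathrm{Exc}(\varphi)=\mathcal E$.

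The hard part will be the structural input about $p\colon Y\to V'$ that makes the previous paragraph work: that $p$ is equidimensional with \emph{irreducible} fibres, each mapped by $\varphi$ onto a line through $p_0$. A priori normalizing the universal family, base changing to $V'$, and resolving $Z$ in the proof of Theorem~\ref{cr} could turn some special fibre reducible even though the corresponding member of $V$ is irreducible; ruling this out --- either by performing those operations compatibly, away from suitable open subsets and with control of the exceptional loci, or by showing directly that a $\varphi$-contracted component $\gamma$ of a reducible fibre forces $\psi(\gamma)$ to split the corresponding (irreducible!) member of $V$ --- is the crux, and is exactly where the hypothesis enters. A related subtlety is that $v\colon V'\to\p^{n-1}$ may itself contract curves, so in the ``horizontal'' case one should first pass to a model on which this does not interfere; once more, the irreducibility of the members is what makes such a choice available. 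Settling these structural statements is where the technical work of the proof lies.
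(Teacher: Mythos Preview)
Your outline is coherent up to the two structural statements you yourself flag in the last paragraph, and those are not minor loose ends: without them neither branch of your analysis of $\mathrm{Exc}(\varphi)$ goes through. In the vertical case, the resolution $Y\to Z$ may introduce new components in special fibres, so irreducibility of the members of $V$ does not by itself guarantee irreducibility of the fibres of $p\colon Y\to V'$, and your flatness/degree argument then collapses. In the horizontal case, if $v\colon V'\to\p^{n-1}$ contracts $B$, the lines $\ell_v$ all coincide and the ``infinitely many distinct lines through $p_0$'' step fails. As written, the proposal identifies the crux but does not resolve it.

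The paper resolves both issues at once, by working with a smaller model than your $Y$. First, a normal-bundle argument going back to Kebekus shows that $\nu\colon V\to E\cong\p^{n-1}$ contracts no curve: if $\nu$ collapsed a curve $B$ to a point $e$, the surface $S$ over $\overline B$ obtained by base change from the universal family carries a section $D$ with $\mathcal N_{D|S}$ mapping to the single tangent line $l_e\subset T_xX$; a nontrivial twist of $\mathcal N_{D|S}$ would force a zero of this map, i.e.\ a member singular at $x$, contradicting hypothesis $ii)$. Hence $\mathcal N_{D|S}$ is trivial, so $S\to X$ factors through a curve, and since every member is irreducible all points of $B$ parametrise the same cycle --- impossible. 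Together with the birationality of $\nu$ established in the proof of Theorem~\ref{ir}, this gives $V\cong\p^{n-1}$, and your worry about $v$ contracting curves evaporates. Second, because every member of $V$ is irreducible, the normalisation $U$ of the universal family is already a $\p^1$-bundle over $V\cong\p^{n-1}$ with a section $\mathcal S$ contracted to the smooth point $x$; hence $\sh_{\mathcal S}(\mathcal S)\cong\sh_{\mathcal S}(-1)$ and $U\cong\mathbb P_{\p^{n-1}}(\sh\oplus\sh(-1))\cong\bl_{pt}\p^n$. There is no need to pass to $V'$, to $Z$, or to a further resolution $Y$, so the issue of reducible special fibres never arises. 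Finally, writing the polarising line bundle of $\tau\colon U\to X$ as $r\widetilde H+s\mathcal S$ and using that $\mathcal S$ is contracted forces $s=0$, so $\tau$ factors through a morphism $\p^n\to X$, which is finite birational onto a normal target and hence an isomorphism.

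In short, the missing ingredient is exactly the Kebekus-type claim $V\cong\p^{n-1}$; once you have it, your programme can be run on $U$ instead of $Y$, where both of your anticipated difficulties simply do not occur.
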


\begin{proof} The "if part" is trivial. If $X\cong \p^n$, consider the family of lines through a point $x\in \p^n$.

The following claim is the main ingredient to prove the other implication.

\begin{clam} If all the curves $C$ of the family $V$ are irreducible then $V\cong\mathbb{P}^{n-1}$.
\end{clam}

\begin{proof} The proof is based on a useful remark due to Kebekus, (\cite{kebekus}, proof of Theorem $3.4$), pointed out by the two authors in \cite{IR}.
Using the same notation as in the proof of Theorem~\ref{ir}, suppose that the morphism $\nu :V\rightarrow E$ contracts a curve $B\subset V$ to a point $e$ of $E\cong\p^{n-1}$. Take $\overline{B}$, the normalization of $B$, and consider the surface $S$ over $\overline{B}$ obtained by base change from the universal family over $B$. Due to the assumption, the projection $p : S \rightarrow \overline{B}$ admits a section $D$; the surface $S$ is irreducible and smooth along $D$. It is well defined a morphism $f :S \rightarrow X$, via composition with the tautological morphism from $\U$ to $X$. The tangent morphism of $f$ induces a morphism $f' : \mathcal{N}_{D|S}\rightarrow l_e$, where $\mathcal{N}_{D|S}$ is the geometric normal bundle of $D$ in $S$ and $l_e$ is the line in the tangent space of $X$ at $x$ corresponding to the point $e \in E$. This tells us that the normal bundle $\mathcal{N}_{D|S}$ has to be trivial. Indeed, if $\mathcal{N}_{D|S}$ is not trivial, the map $f'$ has a zero and the corresponding curve in the family is singular at the point $x$, against the assumptions.
Since $\mathcal{N}_{D|S}$ is trivial the map $f: S\rightarrow X$ is a morphism over a curve $B'$. Since all the curves parametrized by $V$ are irreducible, then all the points of $B$ parametrize the same $1$-cycle, a contradiction.
\end{proof}

Without loss of generality we can replace the universal family $\U$ with its normalization $U\rightarrow \U$. Since all the curves parametrized by $V$ are irreducible, the universal family $u :U \rightarrow V$ is a $\p^1$-bundle over $V$. Indeed, $U$ is a resolution of the birational map $\phi^V : X\dashrightarrow \p^n$ defined in Corollary \ref{ccr}: 
$$
\xymatrix{%
    & U \ar[dr]^{\pi} \ar[dl]  &             \\
    X \ar@{-->}[rr]^{\phi^V} &  & \p^n\\
}
$$
Since $V\cong \p^{n-1}$, the curves parametrized by $V$ are in a one to one correspondence with the lines in $\p^n$. Any curve $C$ parametrized by $V$ is irreducible and the morphism $\pi : U \rightarrow \p^n$  when restricted to $C$ defines an isomorphism with the corresponding line in the projective space $\p^n$.

The section $\mathcal{S}$ is contracted to a smooth point by the tautological morphism $\tau:U\rightarrow X$, hence $\sh_\mathcal{S}(\mathcal{S})\cong \sh_{\mathcal{S}}(-1)$, and $U\cong \mathbb{P}_{\mathbb{P}^{n-1}}(\sh \oplus\sh (-1))$. This means that $U$ is isomorphic to the blown up of $\p^n$ at one point. Now consider the map $\tau : U \rightarrow X\subset\p^N$, where we think the variety $X$ embedded in some projective space $\p^N$. The map $\tau$ is given by some linear system $\Lambda\subset H^0(U, L)$, where $L$ is a divisor on $U$. Hence we can write $L\cong \widetilde{rH}+s\mathcal{S}$ for some  $r, s\in \mathbb{Z}$, where $\widetilde{H}$ is the pull back of the hyperplane section of $\p^n$ via the contraction morphism $U\rightarrow \p^n$. Since $\mathcal{S}$ is contracted by $\tau$, it follows that $L_{\mathcal{S}}\cong \sh_{\mathcal{S}}$. In particular, we have that $s=0$ and $r\geq 1$. The morphism $\tau$ factors through a morphism $t:\p^n\rightarrow X$:
$$
\xymatrix{%
U\ar[dr] \ar[rr]^{\tau} &  & X\subset\p^n\\
    & \p^n\ar[ur]^{t}.  &             \\
}
$$
Since ${\rm Pic} (\p^n) \cong \mathbb{Z}$ then $t$ is a finite birational morphism. It follows by the normality of $X$ that $t$ is actually an isomorphism. This concludes the proof.
\end{proof}

\begin{re} Theorem \ref{cr} and Proposition \ref{Pnchar} suggest that the closed subschemes $V\subset {\rm Chow}_1^{rat}(X)_x$ as in Theorem \ref{ir} carry more information about the geometry of $X$ than we can actually understand at this time. Indeed, to such a $V$ corresponds in a natural way a birational map $\phi^V:X\dashrightarrow\mathbb{P}^n$ as in Corollary \ref{ccr}. If the curve $C_v$ parametrized by a point $v\in V$ is reducible, then we can write $C_v=C_{v,x}\cup Z_v$, where $C_{v,x}$ is irreducible and $x\in C_{v,x}$. Theorem \ref{cr} shows that $\phi^V(C_{v,x})$ is a line in $\mathbb{P}^n$ and that $\phi(Z_v)$ is contained in the indeterminacy locus of $(\phi^V)^{-1}$. In the next sections we give a notion of minimality for the family of rational curves $V$ and some accessible examples of minimal families are studied.  It turns out that in all these first examples the component $C_{v,x}$ of $C_v$ (where $v\in V$ parametrizes a reducible curve) is a minimal rational curve and $\nu(v)$ is a point of the variety of minimal rational tangents at $x$ (cfr. \cite{H1}, \cite{H2}). It would be interesting to understand if this is always the case, that is, if the family $V$ is in some sense minimal (non necessarily minimal under our definition) this implies that there exists a family of minimal rational tangents  $H$ such that $H_x$ is contained in $\bigcup_{C_v red.} C_{v,x}$.
\end{re}

\endproof

\section{Rationality Degree}

In this section, as an application of Theorem \ref{cr}, we give a possible definition of rationality degree for a projective rational variety $X$ and we classify rational varieties for small values of the rationality degree. 

We saw in the previous section: if a variety $X$ is rational,
then for the general point $x\in X$ we can find a family of rational
curves parametrized by a closed subscheme $V\subset
{\Chow}_{rat}^1(X)_x$ satisfying the assumption of
Theorem~\ref{ir}. The family $V$
induces a birational map $\phi^V :X\dashrightarrow
\p^n$. In particular, the normalization $U$ of the universal family $\U\rightarrow X$ is a resolution of $\phi^V$. Diagram~\ref{diagram1} and Diagram~\ref{diagram2} summarize the setting we will work and the notation we will use through this section.

Diagram~\ref{diagram1}:
$$
\xymatrix{%
\mathcal{E} \ar@/^20pt/[rrr]^{\widetilde{\tau}_{\mathcal{E}}} \ar@{^{(}->}[r] & U \ar[d]_u \ar[r]^{\widetilde{\tau}} \ar[rd]^{\tau} & {\rm Bl}_x(X) \ar[d] & E \cong \mathbb{P}^{n-1} \ar@{_{(}->}[l] \\
&V \ar@/_25pt/[urr]_{\nu} & X
}
$$
Diagram~\ref{diagram2}:
$$
\xymatrix{%
 {\rm Bl}_p(\p^n) \ar[d]& \ar[l]^{\widetilde{\varphi}}U\ar[rd]^{\tau}\ar[ld]^{\varphi}  \ar[r]^{\widetilde{\tau}}& {\rm Bl}_x(X)\ar[d]  \\
\p^n &  &  \ar@{-->}[ll]^{\phi} X
}
$$
In the above diagrams we kept the notation as in Theorem~\ref{ir} and Theorem~\ref{cr}.

\begin{re}Looking at the Cremona transformations of the projective space $\p^n$ to itself, one can easily see that, given a rational variety $X$, for the general point $x \in X_{reg}$ there exist infinitely many families of rational curves of arbitrarily large degree satisfying the assumption of Theorem~\ref{ir}.
\end{re}

\begin{re}\label{famcha} Let $V\subset {\Chow}_{rat}^1(X)_x$ be a family of rational curves satisfying the assumption of Theorem~\ref{ir} at a point $x\in X_{reg}$. It is easy to see that for the general $x'\in X_{reg}$ there exists a family of rational curves $V'\subset {\Chow}_{rat}^1(X)_{x'}$, satisfying the assumption of Theorem~\ref{ir} at the point $x'\in X$, such that:\\
i) $V$ and $V'$ are in the same connected component of ${\Chow}_{rat}^1(X)$;\\
ii)  $V$ and $V'$ induce the same birational map $\phi^V=\phi^{V'} : X\dashrightarrow \p^n$. 
\end{re}

Now we are ready to define the \emph{rationality degree} of $X$.

\begin{definition}\label{rd1} Let $X$ be a projective rational variety and $V\subset {\Chow}_{rat}^1(X)_x$ be a family of rational curves satisfying the assumption of Theorem~\ref{ir} at some point $x\in X_{reg}$. Let $\phi^V: X \dashrightarrow \p^n$ be the birational map induced by $V$. Define the following set:
$$A_{V}:=\{ D \ {\rm very} \ {\rm ample} | \exists W \subset H^0(X,D){\rm \ such\ that} \  \phi_{ | W | }=\phi^V   \}.$$ 
where we allow $ | W |$ to have a fixed component. Then we call degree of $V$ the minimum of the following set: 
$$\{ D \cdot C |\ D \in A_V\ {\rm and} \ [C]\in V   \}.$$
\end{definition}

\begin{re} The set $A_V$ is never empty. Let $\phi: X \dashrightarrow \p^n$ be defined by the subspace $ W \subset H^0(X,D)$, where $D$ is a big divisor. Then $A=D+B$ for some very ample divisors $A$ and $B$. For any section $s\in H^0(X,B)$, we have the injetive homomorphism:
$$
\xymatrix{%
 \mu: H^0(X,D) \ar[r]^{\cdot s}  & H^0(X,A)\\
}
$$
given by multiplication. It is easy to see that $\phi =\phi_{ | W | }=\phi_{ | \mu ( W ) | }$.

Clearly, if $A\in A_V$ and $W \subset H^0(X,A)$ such that $\phi_{ | W | }=\phi^V$, then $\phi_{ | W | }$ maps the general curve of $V$ into a line.
\end{re}

\begin{definition}\label{rd2} Let $X$ be a rational variety.
\begin{itemize}
\item[1)] If $x\in X_{reg}$ admits a global system of parameters, then we call rationality degree at $x$ the minimum degree among all closed subschemes  $V\in {\rm Chow}_1^{rat}(X)_x$ satisfying the assumption of Theorem~\ref{ir}.
\item[2)] We call rationality degree of $X$ the minimum rationality degree among all the points $x\in X_{reg}$ that admit a global system of parameters.
\end{itemize}
\end{definition}

The next Proposition is straightforward, but it shows how the notion of rationality degree well adapts to the well known classification of smooth algebraic surfaces.

\begin{prop}\label{clsu} Let $X$ be a minimal rational surface. Then the following hold:
\begin{itemize}
\item the rationality degree of $X$ is $1$ if and only if $X\cong\p^2$;
\item the rationality degree of $X$ is $2$ if and only if $X\cong\p^1\times\p^1$;
\item the rationality degree of $X$ is $k+1$ ($k>1$) if and only if $X\cong\mathbb{F}_k$.
\end{itemize}
\end{prop}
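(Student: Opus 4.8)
The plan is to combine the Enriques classification of relatively minimal rational surfaces with a short intersection-number computation. A minimal rational surface is $\p^2$, $\p^1\times\p^1=\mathbb{F}_0$, or a Hirzebruch surface $\mathbb{F}_k$ with $k\ge 2$, so it suffices to compute the rationality degree of each. On $\mathbb{F}_k$ ($k\ge1$) let $C_0$ be the negative section ($C_0^2=-k$) and $f$ a ruling fibre ($f^2=0$, $C_0\cdot f=1$); the Mori cone is spanned by $[C_0]$ and $[f]$, so a very ample — hence ample — divisor has the shape $D=aC_0+bf$ with $a=D\cdot f\ge1$ and $b-ak=D\cdot C_0\ge1$, i.e.\ $a\ge1$, $b\ge ak+1$. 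On $\p^1\times\p^1$ I write $f_1,f_2$ for the two ruling classes, so that a very ample $D=af_1+bf_2$ has $a,b\ge1$.

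\emph{Lower bounds.} The rationality degree is always at least $1$, with equality for $\p^2$, witnessed by the family of lines through a point with $D=\sh_{\p^2}(1)$. Let now $X$ be $\mathbb{F}_k$ ($k\ge1$) or $\p^1\times\p^1$, let $V$ satisfy i)--iii) at some $x$, and let $D\in A_V$. The numerical class of the members of the irreducible family $V$ is constant, so I may estimate $D\cdot C$ on a general, hence irreducible, member $C$. Since the general member $C$ is not a fibre — a family of fibres through $x$ is a single point and cannot cover $X$ — we get $C\cdot f\ge1$ on $\mathbb{F}_k$, and $C\cdot f_1,\,C\cdot f_2\ge1$ on $\p^1\times\p^1$. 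On $\mathbb{F}_k$ we also have $C\ne C_0$, a covering family cannot have the rigid negative section as general member, so $C\cdot C_0\ge0$ and therefore $D\cdot C=a(C\cdot C_0)+b(C\cdot f)\ge b\ge ak+1\ge k+1$. On $\p^1\times\p^1$, $D\cdot C=a(C\cdot f_1)+b(C\cdot f_2)\ge a+b\ge2$. Hence the rationality degree of $\mathbb{F}_k$ is $\ge k+1$ and that of $\p^1\times\p^1$ is $\ge2$. (Equality in the $\mathbb{F}_k$ estimate forces $a=1$, $b=k+1$, $C\cdot C_0=0$, $C\cdot f=1$, i.e.\ $C\in|C_0+kf|$, which dictates the family used below.)

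\emph{Upper bounds.} On $\p^1\times\p^1$, take $V$ to be the pencil of divisors in $|\sh(1,1)|$ through $x$ and a general second point $x'$. One verifies i)--iii): the point is that the base scheme of this pencil is the reduced pair $\{x,x'\}$, which forces \emph{every} member to be smooth at $x$ and makes the assignment (member)$\mapsto$(tangent direction at $x$) a bijection. By the uniqueness clause of Corollary \ref{ccr}, the associated $\phi^V:\p^1\times\p^1\dashrightarrow\p^2$ is — up to $\Aut\p^2$ — the birational map defined by the net of $\sh(1,1)$-divisors through $x'$; hence $D=\sh(1,1)\in A_V$ and $D\cdot C=2$, so the degree of $V$ is $2$. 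On $\mathbb{F}_k$ ($k\ge2$), take $V$ to be the pencil of curves in $|C_0+kf|$ through $x$ and $k-1$ general points $y_1,\dots,y_{k-1}$ lying off $C_0$; as $|C_0+kf|$ is base-point free with members of arithmetic genus $0$, for general $y_i$ the base scheme of the pencil is the reduced set $\{x,y_1,\dots,y_{k-1}\}$, and i)--iii) follow as before. Again by Corollary \ref{ccr}, $\phi^V$ is the birational map $\mathbb{F}_k\dashrightarrow\p^2$ defined by the net $|C_0+kf-y_1-\cdots-y_{k-1}|$ (of the correct degree to be birational onto $\p^2$); multiplying this net by a section cutting out a general fibre realizes $\phi^V$ through a subsystem of $|C_0+(k+1)f|$, which is very ample. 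Hence $D=C_0+(k+1)f\in A_V$ and $D\cdot C=(C_0+(k+1)f)\cdot(C_0+kf)=k+1$, so the degree of $V$ is $k+1$. Combining with the lower bounds gives the three equalities, and the ``if and only if'' statements follow since the values $1,\,2,\,k+1\ (k\ge2)$ are pairwise distinct and attained only by $\p^2$, $\p^1\times\p^1$, $\mathbb{F}_k$ respectively.

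The Enriques classification and the intersection-number arithmetic are routine; the real content is checking i)--iii) for the two pencils above — in particular that the base scheme is reduced \emph{at $x$}, so that condition ii) holds for every member and not only the general one — and identifying $\phi^V$ precisely enough, through the uniqueness clause of Corollary \ref{ccr}, to locate a small very ample divisor in $A_V$. I expect this last identification (and the resulting description of $A_V$) to be the main obstacle.
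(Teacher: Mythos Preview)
Your argument is correct and follows essentially the same route as the paper: an intersection-number lower bound on $D\cdot C$ for any covering family, together with an explicit family in $|C_0+kf|$ (resp.\ $|\sh(1,1)|$) achieving the bound with the very ample class $C_0+(k+1)f$ (resp.\ $\sh(1,1)$) in $A_V$. The paper is terser---it declares the $\p^2$ and $\p^1\times\p^1$ cases ``clear'' and produces the $\mathbb{F}_k$ family by running $k-1$ Sarkisov type~II links plus one type~I link rather than by naming the pencil through $x,y_1,\dots,y_{k-1}$ directly---but these are the same construction, and your worry about identifying $\phi^V$ is unfounded: the uniqueness clause of Corollary~\ref{ccr} is exactly what pins it down, as you in fact use.
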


\begin{proof} The statement is clear when $X\cong \p^2$ and when $X\cong \p^1\times \p^1$.

Let $X$ be a $\mathbb{F}_k$ surface with $k>1$. Let $f$ be the general fiber of the projection $\mathbb{F}_k\rightarrow \p^1$ and let $C_0$ be the section with negative self intersection, say $C_0^2=-k$. It is well known that $\overline{NE}(X)$ is generated by the classes $[f]$ and $[C_0]$.

The surface $\mathbb{F}_k$ can be connected to $\p^2$ via one link of type $I$ and $(k-1)$ links of type $II$ of the Sarkisov program (cfr. \cite{C}). Looking at the family $V$ of rational curves on $X$ induced by the lines through the general point of $\p^2$, we get that the rationality degree of $X$ is at most $k+1$ (because the general curve $C$ parametrized by $V$ has class $C_0+kf$ and the very ample divisor with class $C_0+(k+1)f$ is in $A_V$) .

Every effective curve on $X$ is numerically equivalent to $C_0$ or $aC_0+bf$, with $a\geq0$ and $b\geq ak$. Let $V$ be a family of rational curves satisfying the assumption of Theorem \ref{cr} at some point $x\in X$, $C=aC_0+bf$ be the general curve parametrized by $V$ and $D=\alpha C_0+\beta f$ be an ample divisor. Note  that $a\geq 1$ and $b\geq k$, since $V$ is a covering family and $x\in C$ for any $[C]\in V$. Similarly, $\alpha \geq 1$ and $\beta > k$, being $D$ ample. Rewriting $C$ as 
$$C=(C_0+kf)+(a-1)C_0+(b-k)f$$ 
and computing the intersection number, we have:
$$D\cdot C=\beta +\alpha (b-ak)+\beta (a-1) > k.$$
This concludes the proof.
\end{proof}

\begin{re}\label{subl} It is not hard to show that the only smooth non minimal rational surface of degree 2 is the plane $\p^2$ blown-up at one point.
\end{re}

The following obvious proposition gives a description of the rational varieties with rationality degree one. 
\begin{prop}\label{Pn} Let $X$ be a normal projective variety of dimension $n$. If the rationality degree of $X$ is $1$, then $X$ is isomorphic to the projective space $\p^n$.
\end{prop}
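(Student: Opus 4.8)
The plan is to unwind the definition of rationality degree and reduce the statement to Proposition \ref{Pnchar}.

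First I would translate the hypothesis into concrete data. Saying that the rationality degree of $X$ equals $1$ means, by Definition \ref{rd2}, that there is a point $x\in X_{reg}$ and an irreducible closed subscheme $V\subset \Chow_{rat}^1(X)_x$ satisfying the hypotheses of Theorem \ref{ir} at $x$ whose degree, in the sense of Definition \ref{rd1}, is $1$. By the definition of the degree of $V$, this in turn means that there exist a very ample divisor $D\in A_V$ and a cycle $[C]\in V$ with $D\cdot C=1$. Since the members of the irreducible family $V$ are algebraically equivalent $1$-cycles, the intersection number $D\cdot C$ is constant as $[C]$ ranges over $V$; hence $D\cdot C=1$ for \emph{every} $[C]\in V$.

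The key step is an elementary positivity argument showing that every member of $V$ is then irreducible. Write an arbitrary $[C]\in V$ as an effective $1$-cycle $C=\sum_i a_i C_i$ with $a_i\ge 1$ and the $C_i$ distinct irreducible curves. Because $D$ is very ample we have $D\cdot C_i\ge 1$ for each $i$, so $1=D\cdot C=\sum_i a_i\,(D\cdot C_i)\ge \sum_i a_i$, which forces a single summand with coefficient $1$. Thus $C=C_i$ is an irreducible reduced curve (indeed a line under the embedding defined by $|D|$, matching the general philosophy of Corollary \ref{ccr}). Consequently all members of $V$ are irreducible.

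At this point the conclusion is immediate: $X$ is normal, and $V\subset \Chow_{rat}^1(X)_x$ satisfies the hypotheses of Theorem \ref{ir} at $x\in X_{reg}$ with all members irreducible, so Proposition \ref{Pnchar} gives $X\cong \p^n$. I do not expect any genuine obstacle here; the only points needing a (standard) justification are the constancy of $D\cdot C$ over the irreducible family $V$ and the positivity inequality $D\cdot C_i\ge 1$ for a very ample $D$ against an irreducible curve, after which the statement is exactly Proposition \ref{Pnchar}.
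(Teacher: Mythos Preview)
Your argument is correct and follows exactly the paper's approach: from rationality degree $1$ one extracts a family $V$ and a very ample $A\in A_V$ with $A\cdot C=1$, deduces that every member of $V$ is irreducible, and applies Proposition~\ref{Pnchar}. The only difference is that you spell out explicitly the constancy of $A\cdot C$ and the positivity step forcing irreducibility, which the paper leaves implicit.
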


\begin{proof} There exists a family of rational curves $V\subset {\Chow}_{rat}^1(X)_x$ satisfying the assumption of Theorem \ref{ir} at some  point $x\in X_{reg}$. The induced birational map $\phi^V : X\dashrightarrow \p^n$ is given by a linear system $\Gamma \subset | A |$, where $A$ is very ample and $A\cdot C=1$ for $[C]\in V$. Proposition \ref{Pnchar} applies, since all the curves parametrized by $V$ are irreducible. This concludes the proof. 
\end{proof}

 The study of rational varieties with fixed rationality degree $k$ is related to the understanding of the base locus of the birational map $\phi^V:X\dashrightarrow \p^n$ induced by the family of rational curves $V$ realizing the rationality degree. When the rationality degree has small value, it imposes restrictions to the base locus of the birational map. 
In Theorem \ref{thmdeg2} we classify smooth rational varieties of rationality degree $2$.

\begin{theorem}\label{thmdeg2} Let $X$ be a smooth projective rational variety. If the rationality degree of $X$ is $2$ then $X$ is one of the following:
\begin{itemize}
\item $Q^n\subset\p^{n+1}$, the quadric hypersurface;
\item $\bl_L(\p^n)$, the blow-up of $\p^n$ along a linear subspace $L$;
\item $\p^k\times \p^h$, with $h,k\geq 1$;
\item $Z\subset\p^k\times \p^h$, where $Z$  is the general section in $|\sh(1,1)|$ and $h+k\geq 3$;
\end{itemize}
\end{theorem}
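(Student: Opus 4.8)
The plan is to reduce $X$ to a conic-connected manifold in the sense of \cite{IR}, invoke the Ionescu-Russo classification of such manifolds (Theorem 2.2 of \cite{IR}), and then retain from their list exactly the varieties compatible with the constraint coming from Theorem~\ref{cr}; almost all of the work is concentrated in the case of Picard number one, which yields the quadric.

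First I would set up the reduction. Let $V$ be a family realizing $\ratdeg(X)=2$, so there is a very ample $D\in A_V$ and $[C]\in V$ with $D\cdot C=2$, and $\phi^V$ is defined by a linear subsystem of $|D|$. Embedding $X$ by the complete system $|D|$ into some $\p^M$, the general member of $V$ becomes an irreducible conic contained in $X$ (an irreducible degree-two curve being a smooth conic), and by Remark~\ref{famcha} the analogous covering family exists through the general point of $X$; hence two general points of $X$ are joined by an irreducible conic contained in $X$, that is, $X\subseteq\p^M$ is a conic-connected manifold. We may assume once and for all that $X\not\cong\p^n$, since $\ratdeg(\p^n)=1$ by Proposition~\ref{Pn}.

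Next I would apply Theorem 2.2 of \cite{IR} and split its conclusion according to the Picard number of $X$. If the Picard number is at least two, the classification gives at once that $X$ is the blow-up $\bl_L(\p^n)$ of $\p^n$ along a linear subspace $L$, or $\p^k\times\p^h$ with $h,k\ge 1$, or a general member of $|\sh(1,1)|$ on $\p^k\times\p^h$ with $h+k\ge 3$ --- the degenerate cases $h+k\le 2$ being absorbed into the quadric and into $\p^1\times\p^1$. (Conversely one checks that each of these has rationality degree $2$, by producing the relevant covering family of conics --- strict transforms of lines through a general point for $\bl_L(\p^n)$, suitable $(1,1)$-curves through a general point for the products and their $(1,1)$-sections --- together with a very ample $D$ satisfying $D\cdot C=2$, and invoking Proposition~\ref{Pn} for $\ratdeg>1$.) If the Picard number of $X$ is one, the list of \cite{IR} consists of the quadric $Q^n$, the second Veronese $\nu_2(\p^n)$ (and its isomorphic projections), and a short further list of conic-connected manifolds of Picard rank one. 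Any entry abstractly isomorphic to $\p^n$, the Veronese among them, has rationality degree $1$ by Proposition~\ref{Pn} and is discarded; and $Q^n$ itself has rationality degree $2$, since with $D=\sh_{Q^n}(1)$ the conics cut on $Q^n$ by the $2$-planes through the line joining $x$ to a fixed general point $z\in Q^n$ form a family satisfying $i)$, $ii)$, $iii)$ with $D\cdot C=2$ and with $\phi^V$ the projection from $z$, while $Q^n\not\cong\p^n$. So the whole matter comes down to showing that no \emph{other} Picard-rank-one entry of the list has rationality degree $2$.

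For this I would analyse the base locus of $\phi^V$ in the Picard-rank-one case. Write the ample generator of the Picard group as $H_X$ (it is very ample for the varieties occurring in \cite{IR}'s list), put $D=dH_X$, and let $mH_X$ be the moving part of the linear system defining $\phi^V$, so that $m\,(H_X\cdot C)=D\cdot C=2$. If $H_X\cdot C=1$ then every member of $V$ is irreducible --- an effective $1$-cycle of $H_X$-degree one cannot split --- and Proposition~\ref{Pnchar} forces $X\cong\p^n$, contrary to assumption; so $H_X\cdot C=2$ and $\phi^V$ is given by an $n$-dimensional linear subsystem of $|H_X|$. In the embedding $X\subseteq\p^N$ by $|H_X|$, the map $\phi^V$ is therefore the linear projection from a center $\Pi\cong\p^{N-n-1}$, birational onto $\p^n$; as $\phi^V$ is an isomorphism near the general point $x$ we have $x\notin\Pi$, and since each conic $C\in V$ has degree $2$, passes through $x$, and is sent to a line, the center $\Pi$ meets $C$ in a single point lying on $C$, which therefore belongs to $X\cap\Pi$. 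A multiplicity and degree analysis of this projection --- using the smoothness of $X$, the fact that the conics of $V$ cover $X$, and that the projection is birational and an isomorphism near $x$ --- then forces $X\cap\Pi$ to be a single point and $\deg X=2$, so that $X$ is a smooth quadric and $X\cong Q^n$; the same analysis rules out the remaining Picard-rank-one conic-connected manifolds, whose degrees are too large for a birational linear projection onto $\p^n$ carrying conics to lines to exist. I expect this excess-intersection bookkeeping --- pinning down that the center of the projection collapses to a single point and that $\deg X=2$ --- to be the main obstacle; the other ingredients are the classification of \cite{IR} and the routine verification that the listed varieties have rationality degree exactly $2$.
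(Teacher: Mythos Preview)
Your overall strategy coincides with the paper's: embed $X$ by the very ample $|A|$ with $A\cdot C=2$ to exhibit it as a conic-connected manifold, invoke Theorem~2.2 of \cite{IR}, check that the Picard-number $\ge 2$ entries have $\ratdeg=2$, and reduce the Picard-number-one case to $Q^n$. The genuine gap is this last case, which is essentially the whole theorem. First, a factual correction: as quoted here, Theorem~2.2 of \cite{IR} does \emph{not} give a short explicit list when $\rho(X)=1$; it only asserts that $X$ is Fano with index $i(X)\ge(n+1)/2$, so one cannot proceed by eliminating finitely many candidates. Your reduction to $H_X\cdot C=2$ (since $H_X\cdot C=1$ makes every member of $V$ irreducible and Proposition~\ref{Pnchar} gives $X\cong\p^n$), and hence to $\phi^V$ being a linear projection in the $|H_X|$-embedding, is correct; but the ``multiplicity and degree analysis'' you postpone is exactly the substance of the proof, and nothing in your outline suggests how to control the center $X\cap\Pi$.

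The paper's argument is not a projection-degree count. After proving that the general $\overline D\in\Gamma$ is smooth (Claim in the proof), it intersects $d-1$ general members of $\Gamma$, where $d=\codim_X Z$ and $Z$ is the base locus, to build a flag $X=X_0\supset X_1\supset\cdots\supset X_{d-1}$ of smooth varieties, and shows that on $X_{d-1}$ the restricted linear system acquires a fixed component so that $\phi^V|_{X_{d-1}}$ becomes a \emph{morphism} onto $\p^{n-d+1}$. If $d\le n-2$, Lefschetz gives $\rho(X_{d-1})=1$, forcing $X_{d-1}\cong\p^{n-d+1}$, and then B\u{a}descu's theorem \cite[5.4.10]{BS}, applied inductively up the flag, forces $X\cong\p^n$, a contradiction. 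If $d=n-1$, a Riemann--Roch computation on the surface $X_{n-2}$ yields $h^0(X_{n-3},\overline D_{n-3})=\overline D_{n-3}^3+3$, so the threefold $X_{n-3}$ has minimal degree and is a smooth scroll by \cite{EH}, contradicting $\rho(X_{n-3})=1$. Hence $d=n$, $Z$ is a single point, $\overline D^n=2$ and $h^0(\overline D)=n+2$, so $X\cong Q^n$. This flag/B\u{a}descu/minimal-degree machinery is substantially different from, and more delicate than, the excess-intersection bookkeeping you anticipate; you should expect to need these ingredients rather than a direct degree argument.
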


\begin{proof} There exists a family of rational curves $V\subset {\Chow}_{rat}^1(X)_x$ satisfying the assumption of Theorem \ref{ir} at some  point $x\in X$. The induced birational map $\phi^V : X\dashrightarrow \p^n$ is given by a linear system $\Gamma \subset | A |$, where $A$ is very ample and $A\cdot C=2$ for $[C]\in V$. In particular, the complete linear system $| A |$ exhibits $X$ as a conic connected manifold in $\p^{h^0(X,A)-1}$ as defined in \cite{IR}.

By Theorem 2.2 in \cite{IR}, we have that $X$ must be one of the following:
\begin{itemize}
\item[i)] a Fano manifold of Picard number one and index $i(X)\geq \frac{n+1}{2}$;
\item[ii)] $\bl_L(\p^n)$, the blow-up of $\p^n$ along a linear subspace $L$;
\item[iii)] $\p^k\times \p^h$, with $h,k\geq 1$;
\item[iv)] $Z\subset\p^k\times \p^h$, where $Z$  is the general section in $|\sh(1,1)|$ and $h+k\geq 3$;
\end{itemize}
It is easy to see that ii), iii) and iv) are rational varieties of rationality degree 2. By Proposition \ref{Pn}, it is enough to produce a family of rational curves $V$ as in Theorem \ref{ir} and an ample divisor $A$ belonging to the set $A_V$ (defined in Definition \ref{rd1}) with $A\cdot C=2$ (here $C$ is the general curve of $V$).

Let $X$ be as in ii), that is, $X$ is the blow-up $\pi : \bl_L(\p^n)\rightarrow \p^n$ of $\p^n$ along a linear subspace $L$. Call $E$ the exceptional divisor and $\mathcal{H}$ the pull-back of the hyperplane section. Note that  the contraction morphism $\pi : \bl_L(\p^n)\rightarrow \p^n$ is naturally defined by the vector space $H^0(X,\mathcal{H})$.  Let $V$ be the family of rational curves induced on $X$ by a family of lines in $\p^n$ through a point $x\not\in L$. The divisor $A=2 \mathcal{H}-E$ is very ample and $A \cdot C=2$ (note that the general curve $C$ parametrized by $V$ is the proper transform of a line). If $s\in H^0(X, \mathcal{H}-E)$ is a section whose zero locus defines the proper transform of an hyperplane containing $L$, then the homomorphism given by multiplication by $s$
$$
\xymatrix{%
 \cdot s : H^0(X,\mathcal{H}) \ar[r]  & H^0(X,A)\\
}
$$
shows that $A\in A_V$.

Assume that $X\cong \p^k\times \p^h$, i.e. $X$ is as in iii). In this case $X$ can be obtained by blowing up $\p^{h+k}$ along two disjoint linear subspaces, say $L_1\cong \p^{h-1}$ and $L_2\cong \p^{k-1}$, and blowing down the proper transform of the hyperplane $H_{L_1,L_2}$ containing  $L_1$ and $L_2$. Call $\phi : \p^{h+k}\dashrightarrow X$ the birational map described in the previous period. If we think $X$ embedded in $\p^{(h+1)(k+1)-1}$ using the Segre embedding, then $\phi$ is defined by the space of sections $H^0(\p^{h+k}, \sh (2)\otimes \mathcal{I}_{L_1}\otimes \mathcal{I}_{L_2})$. Let $V$ be the family of rational curves induced on $X\subset \p^{(h+1)(k+1)-1}$ by a family of lines in $\p^{h+k}$ through a point $x\not\in H_{L_1,L_2}$. Note that $V$ is family of conics. To conclude that the rationality degree of $X$ is 2 we need to show that the inverse map $\phi^{-1}: X \dashrightarrow \p^{h+k}$ is defined by a subspace of $H^0(X, \sh (1))$. Note that we can choose a basis of $H^0(\p^{h+k}, \sh (2)\otimes \mathcal{I}_{L_1}\otimes \mathcal{I}_{L_2})$ such that the first $h+k+1$ sections are the images of the sections of $H^0(\p^{h+k}, \sh (1))$ via the multiplication map
$$
\xymatrix{%
 \cdot s : H^0(\p^{h+k}, \sh (1)) \ar[r]  & H^0(\p^{h+k}, \sh (2)\otimes \mathcal{I}_{L_1}\otimes \mathcal{I}_{L_2})  \\
}
$$
where $s\in H^0(\p^{h+k}, \sh (1))$ is the section whose zero locus defines $H_{L_1,L_2}$. Call $\pi$ the projection from the subspace of $\p^{(h+1)(k+1)-1}$ defined by the equations $\{x_{0}=...=x_{h+k}=0\}$ to the subspace defined by the equations $\{x_{h+k+1}=...=x_{(h+1)(k+1)-1}=0\}$. It is not hard to see the $\pi$ restricted to $X$ is equal to $\phi^{-1}$.

Let $X$ be as in iv). We can think $X$ as the general hyperplane section of $\p^k\times \p^h\subset \p^{(h+1)(k+1)-1}$. In particular, if $\phi : \p^{h+k}\dashrightarrow  \p^k\times \p^h$ is the birational map described above, then $X$ is the proper transform of a hypersurface $Y$ of degree 1 or 2 in $\p^{h+k}$. It is easy to see that, if $Y$ is an hyperplane, then $X$ has rationality degree 2. Assume that $Y$ has degree 2. If $y\in Y$ is a smooth point of $Y$, then projecting $Y$ from $y$ to a general hyperplane gives a birational map $\pi_1 : Y\dashrightarrow \p^{h+k-1}$. Then the map $f :=\phi^{-1}\circ \pi_1^{-1}: \p^{h+k-1} \dashrightarrow X$ is birational. It is not hard to see that if $p$ is a point in the open subset of $\p^{h+k-1}$ where $f$ restricts to an isomorphism, then the lines through $p$ induce a family $V$ of conics on $X$. Since the birational map $f^{-1} : X\dashrightarrow \p^{h+k-1}$ is defined by a subspace of $H^0(X, \sh (1))$, it follows that $X$ has rationality degree 2.

We want to show that if $X$ is as in i), then $X\cong Q^n\subset \p^{n+1}$. Let us assume that $\Pic X\cong \mathbb{Z}$. Let $Z$ be the base locus of $\phi^V: X\dashrightarrow \p^n$. Note that if a curve $C$ in the family $V$ is reducible, then $C$ has exactly two components.
\begin{clam}\label{c1} The general divisor of the linear system $\Gamma$ is smooth. In particular, all the curves parametrized by $V$ are smooth along $Z$. 
\end{clam}

\begin{proof}  Let $\overline{D}\in \Gamma$ be the general divisor. Note that $\overline{D}$ is irreducible. Indeed, the assumption on the rationality degree implies that not all the curves parametrized by $V$ are irreducible. Since $\Pic X\cong \mathbb{Z}$, any effective divisor is ample. If $\overline{D}$ is not irreducible, then each irreducible component of $\overline{D}$ has positive intersection with each component of the reducible curves parametrized by $V$, leading to a contradiction.

By Bertini Theorem $\overline{D}$ is smooth away from $Z$. Assume that $\overline{D}\in \Gamma$ is singular at a point $z\in Z$. Note that $\dim \varphi_*(\tau^{-1}(z))>0$ (cfr. Diagram \ref{diagram2}).
First we want to show that no component of the curves parametrized by $V$ is contained in $Z$. On the contrary, assume that there exists a curve $C'=C'_1+C'_2$ in the family $V$ such that $x\in C'_1$ and $C'_2 \subset Z$. Since $C'$ is connected and $C'_2\subset \overline{D}$ for any $\overline{D} \in \Gamma$, it follows that $\overline{D}\cdot C'_1\geq 2$ and $\overline{D}\cdot C'_2\leq 0$; a contradiction.
Hence, we can find a point $[C']\in V$ such that, if $C'$ is the corresponding curve on $X$, $C'$ contains the point $z$ and the intersection of $C'$ and $\overline{D}$ is proper. It follows that $\overline{D}\cdot C> 2$, a contradiction.
The same argument can be used to show that all the curves parametrized by $V$ are smooth along $Z$.
\end{proof}
Let $d:=\codim_X Z$ be the codimension of $Z$ in $X$. It follows that we can inductively define the following flag of smooth polarized varieties:
\begin{equation}\label{flag}
(X_{d-1},\overline{D}_{d-1})\subset .... \subset (X_{1},\overline{D}_1)\subset  (X_0,\overline{D}_0)=(X,\overline{D});
\end{equation}
where $X_i$ is the birational transform of the general linear subspace $\p^{n-i}$ and $\overline{D}_i$ is the restriction of the divisor $\overline{D}_{i-1}$ to $X_i$, for $i=1,...,d-1$. Note that the restricted birational map $\phi_{|X_i} : X_i\dashrightarrow\p^{n-i}$ is defined by a linear system $\Gamma_i$ that is the restriction to $X_i$ of $\Gamma_{i-1}\subset | \overline{D}_{i-1} |$.  Note also that $\overline{D}_i\cdot C=2$ for any curve that is the preimage of a line in $\p^{n-i}$.
For any $i=1,...,d-1$, we have the following exact sequence:
\begin{equation}\label{es1}
0\rightarrow \sh_{X_{i-1}}\rightarrow \sh_{X_{i-1}}(\overline{D}_{i-1})\rightarrow\sh_{X_i}(\overline{D}_i)\rightarrow 0.
\end{equation}
Then the linear system $\Gamma_{d-1}$ has a fixed component.
Write $\Gamma_{d-1}=\overline{\Gamma}_{d-1}+F_{d-1}$, where $\overline{\Gamma}_{d-1}$ and $F_{d-1}$ are the movable part and the fixed part of the linear system $\Gamma_{d-1}$ respectively.
Look at the map $\phi_{|X_{d-1}} : X_{d-1}\dashrightarrow\p^{n-d+1}$. Let $V_{d-1}$ be a family of rational curves satisfying the assumption of Theorem \ref{ir} at a point $x$ and inducing $\phi_{|X_{d-1}}$.

\begin{clam}\label{c2} The map $\phi_{|X_{d-1}} : X_{d-1}\dashrightarrow \p^{n-d+1}$ is a morphism. 
\end{clam}
\proof The general divisor of $\Gamma_{d-1}$ can be written as $M_{d-1} + F_{d-1}$. Since $V_{d-1}$ is a covering family of curves and $M_{d-1}$ is big, we have that $M_{d-1}\cdot C\geq 1$, where $[C]\in V_{d-1}$. By the way $F_{d-1}$ has been defined, it follows that $F_{d-1}$ is effective and it intersects properly the general curve parametrized by $V_{d-1}$. In particular also $F_{d-1}\cdot C\geq 1$, where $[C]\in V_{d-1}$. 
From the following chain of equalities 
$$\overline {D}_{d-1}\cdot C=(M_{d-1}+F_{d-1})\cdot C=2=M_{d-1}\cdot C +F_{d-1} \cdot C \ \ \ \ \ \ \ \  \ ({\rm where} \ [C]\in V_{d-1}),$$
it follows that $M_{d-1}\cdot C=1$ and $F_{d-1}\cdot C=1$. 

If $C$ is any reducible curve parametrized by  $V_{d-1}$, write it as $C=C_1+C_2$, where $C_1$ is the component through the point $x$. Since $M_{d-1} \cdot C=1$, it follows that 
$$M_{d-1}\cdot C_1=a\geq 1 \ \ \  and \ \ \ M_{d-1} \cdot C_2=-(a-1).$$
	Looking at the fixed component $F_{d-1}$, we have that $F_{d-1}\cdot C_1 + F_{d-1}\cdot C_2=1$. Up to replacing the family $V_{d-1}$ with a family $V'_{d-1}$ that induces the same birational map $\phi_{|X_{d-1}}$ (see Remark \ref{famcha}), we may assume that $C_1\not\subset F_{d-1}$. This implies that $1 = (M_{d-1}+F_{d-1})\cdot C_1\geq a \geq 1$, from which follows that 
$$F_{d-1}\cdot C_2=M_{d-1}\cdot C_1=1 \ \ \ \ and \ \ \ \ F_{d-1}\cdot C_1=M_{d-1}\cdot C_2=0.$$
Let $Z_{d-1}$ be the base locus of $\phi_{|X_{d-1}} : X_{d-1}\dashrightarrow \p^{n-d+1}$. Note that $M_{d-1}$ is the birational transform of the hyperplane in $\p^{n-d+1}$.

The same argument as in the proof of claim \ref{c1} shows that no component of the curves parametrized by $V_{d-1}$ is contained in $Z_{d-1}$. Indeed, assume that there exists a curve $C'=C'_1+C'_2$ in the family $V_{d-1}$ such that $x\in C'_1$ and $C'_2 \subset Z_{d-1}$. Since $C'$ is connected and $C'_2\subset M_{d-1}$ for any $M_{d-1}\in \Gamma_{d-1}$, it follows that $M_{d-1}\cdot C'_1\geq 2$ and $M_{d-1}\cdot C'_2 < 0$; a contradiction.
Now, let $z$ be a point in the indeterminacy locus $Z_{d-1}$ of $\phi_{|X_{d-1}}$.
Since any member of $| \overline{\Gamma}_{d-1} |$ contains $z$, we can find a point $[C']\in V_{d-1}$ and a divisor $M_{d-1}$ such that, if $C'$ is the corresponding curve on $X_{d-1}$, $C'$ contains the point $z$ and the intersection of $C'$ and $M_{d-1}$ is proper. It follows that $M_{d-1}\cdot C\geq 2$, a contradiction. We conclude that $\phi_{|X_{d-1}} : X_{d-1}\dashrightarrow \p^{n-d+1}$ is a morphism. \endproof

We want to show that the base locus $Z$ of $\Gamma$ is a point. By Proposition \ref{clsu} and remark \ref{subl} we can assume that $\dim X\geq 3$. Note also that, by Lefschetz-Sommese theorem, we have that $\rho (X_i)=1$, for any $i=1,...,n-3$. 

Assume that $1\leq \codim_X Z=d\leq n-2$. In this case, the linear system $\Gamma_{d-1}$ has a fixed component and the restricted map $\phi_{|X_{d-1}}: X_{d-1}\rightarrow \p^{n-d+1}$ is a morphism. Recalling that $\rho (X)=1$, we get that $X_{d-1}\cong \p^{n-d+1}$. A Theorem of B\u{a}descu, (cf.~\cite[Theorem 5.4.10]{BS}), implies that $X_d\cong\mathbb{P}^{n-d}$. Using B\u{a}descu's result inductively, we get that $X\cong \p^n$, a contradiction.

Assume that $\codim_X Z=d=n-1$. In this case, $Z$ is a curve, $X_{n-2}$ is a smooth surface and the restricted map $\phi_{|X_{n-2}}: X_{n-2}\rightarrow \p^{2}$ is a morphism. Any effective divisor $B\in H^0(X_{n-2}, \overline{D}_{n-2})$ can be written as $B=C+Z$, where $C$ belongs to the family $V_{n-2}$. The following relation holds:
\begin{equation}\label{D3}
\overline{D}_{n-3}^3=B^2=(C+Z)^2=C^2+2CZ+Z^2=3+Z^2;
\end{equation}
$$\overline{D}_{n-2}\cdot C=C^2+C\cdot Z=2;$$ 
$$\overline{D}_{n-2}\cdot Z=C\cdot Z+ Z^2>0.$$
In particular, $C^2=C\cdot Z=1$ and $0\leq Z^2\leq 1$. By the Riemann-Roch Theorem for singular curves, we get that
$$h^0(B, B_{|B})=(C+Z)C+(C+Z)Z+1+1-p_a(C+Z)= 2+(1+Z^2)+1=4+Z^2.$$
The exact sequence (\ref{es1}) shows that  $h^0(X_{n-3},\overline{D}_{n-3})=6+Z^2=\overline{D}^3_{n-3}+3$. Since $\overline{D}_{n-3}$ is very ample, the threefold $X_{n-3}$ is a variety of minimal degree in $\p^{Z^2+5}$ (cf. \cite{EH}). By Theorem 1 in \cite{EH}, $X_{n-3}$ is a smooth scroll, a contradiction.

To conclude observe that since the base locus $Z$ is a point, we have $\overline{D}^n=(\overline{D}_{|\overline{D}})^{n-1}=2$. From the short exact sequence (\ref{es1}),
we deduce that $H^0(X,\overline{D}))=n+2$. This implies that $X$ is a smooth variety of minimal degree of codimension 1, that is $X\cong Q^n\subset\p^{n+1}$.
\end{proof}

\end{document}